\newtheorem{theorem}{Theorem}[section]
\newtheorem{corollary}[theorem]{Corollary}
\newtheorem{lemma}[theorem]{Lemma}
\theoremstyle{definition}
\newtheorem{remark}[theorem]{Remark}
\newcommand{\ch}{\ensuremath{\bullet}} 
\newcommand{\mt}{\ensuremath{\rightarrow}} 
\newcommand{\fr}{\ensuremath{\frak}} 
\DeclareMathOperator{\Ass}{Ass}
\DeclareMathOperator{\Assm}{Assm}
\DeclareMathOperator{\ann}{ann}
\DeclareMathOperator{\Supp}{Supp}
\DeclareMathOperator{\depth}{depth}
\DeclareMathOperator{\gdepth}{gdepth}
\DeclareMathOperator{\Ext}{Ext}
\DeclareMathOperator{\Hom}{Hom}
\DeclareMathOperator{\Tor}{Tor}
\DeclareMathOperator{\Att}{Att}
\DeclareMathOperator{\pd}{pd}
\DeclareMathOperator{\injd}{injd}
\begin{document}

\title{ON THE VANISHING AND THE FINITENESS OF SUPPORTS OF  GENERALIZED LOCAL COHOMOLOGY MODULES}         
\author{NGUYEN TU CUONG and NGUYEN VAN HOANG}        
\date{Institute of Mathematics\\18 Hoang Quoc Viet Road,  10307  Hanoi, Vietnam}
\maketitle

\vskip .5cm {\begin{quote} \normalsize{\bf Abstract}{\footnote{{\it{Key words and phrases: }} Generalized local cohomology, local cohomology, support, associated primes, generalized depth. \hfill\break
{\it{2000 Subject  Classification:}} 13D45, 13C15.\hfill\break {This work is supported in part by
the National Basis Research Programme in Natural Science of Vietnam.}\hfill\break {$^*$ E-mail: ntcuong@math.ac.vn}\hfill\break {$^{**}$  E-mail: nguyenvanhoang1976@yahoo.com}}. Let $(R,\fr m)$ be a Noetherian local ring,  $I$  an ideal of $R$ and $M, N$ two finitely generated $R$-modules.  The first result of this paper is to prove  a vanishing theorem for generalized local cohomology modules which says that  $H^j_I(M,N)=0$ for all $j>\dim(R)$, provided  $M$ is of  finite projective dimension. Next, we  study and give characterizations for the least and the last integer $r$ such that $\Supp(H^r_I(M,N))$ is infinite.}
\end{quote}

\section{Introduction}

For an integer $j\geqslant 0$, the $j^{th}$  generalized local cohomology module $H^j_I(M,N)$ of two $R$-modules $M$ and $N$ with respect to  an ideal $I$  was defined by  J. Herzog in \cite{Her} as follows
$$\displaystyle H^j_I(M,N)=\varinjlim_n\Ext^j_R(M/I^nM,N).$$
It is clear that $H^j_I(R,N)$ is just the  ordinary local cohomology module $H^j_I(N)$ of $N$ with respect to $I$. For a better understanding about local cohomology modules $H^j_I(N)$, ones established some problems on the finiteness of these modules such as determining when the sets $\Supp (H^j_I(N))$ and $\Ass (H^j_I(N))$ to be finite.  There are some affirmative answers for the finiteness of $\Ass (H^j_I(N))$ and of  $\Supp (H^j_I(N))$ ( see  \cite{Hu}, \cite{HS}, \cite{Kh},  \cite{Mar}, \cite{Nh1}...).  But,  there exist in general local cohomology modules of Noetherian local rings with infinitely many associated primes, see \cite{Ka}. 

Similar problems are raised for generalized local cohomology modules. It should be mentioned here that some basic properties of local cohomology modules can not extend to generalized local cohomology modules. For example, if $N$ is $I$-torsion then $H^i_I(N)=0$ for all $i>0$, but $H^i_I(M,N)\cong\Ext^i_R(M,N)$ and the later does not vanish in general for $i>0$; or while the Grothendieck's Vanishing Theorem says that $H^i_I(N)=0$ for all $i>\dim(N)$, the generalized local cohomology modules $H^i_I(M,N)$ may not vanish in general for infinitely many $i\geqslant 0$. However, we can show  in this paper that $\bigcup_{j\leqslant i}\Supp(H^j_I(M,N))=\bigcup_{j\leqslant i}\Supp(\Ext^j_R(M/IM,N))$ for all $i\geqslant 0$ (Lemma \ref{L24}). It follows that although $H^i_I(M,N)$ may not vanish but $\Supp(H^i_I(M,N))\subseteq\bigcup_{j\leqslant\dim(N)}\Supp(H^j_{I_M}(N))$ for all $i\geqslant 0$, where $I_M=\ann(M/IM)$ the annihilator of $R$-module $M/IM$. Moreover, we also prove that if $M$ has finite projective dimension then $H^j_I(M,N)=0$ for all $j>\dim(R)$ (Theorem \ref{L204}). 
The purpose of this paper is to exploit Lemma \ref{L24} and Theorem \ref{L204} in the studying the finiteness of the supports and the finiteness of the set of associated primes of generalized local cohomology modules. 

Our paper is divided into 5 sections. In section 2, 
 we prove two auxiliary lemmas (Lemmas  \ref{L23} and \ref{L24}) and its consequence (Corollary \ref{C25}) on the support of generalized local cohomology modules.  In section 3, by using spectral sequences, we prove that  $H^j_I(M,N)=0$ for all $j>\dim(R)$, provided  $M$ is of  finite projective dimension (Theorem \ref{L204}). This  generalizes a vanishing result of generalized local cohomology modules with respect to the maximal ideal of  J. Herzog and N. Zamani   \cite[Theorem 3.2]{HZ}. In section 4, we use Lemma \ref{L24} and the notion of  generalized regular sequences introduced by Nhan \cite {Nh1} to characterize the least integer $r$ such that $\Supp(H^r_I(M,N))$ is an infinite set (Theorem \ref{T24}); from this we can describe concretely the finiteness of $\Ass(H^r_I(M,N))$ (Theorem \ref{T28}). In the last section, we study the last integer $s$ such that $\Supp(H^s_I(M,N))$ is an infinite set (Theorem \ref{T34}$(a)$); and we also  give lower and upper  bounds for $s$ (Theorem \ref{T34}$(b)$).

\section{Preliminaries}
Throughout this paper $M, N$ are finitely generated modules over  a Noetherian local ring $(R, \fr m)$. Let $\pd_R(M)$ denote the projective dimension of $M$. For any ideal $I$ of  $R$  we denote by $I_M=\ann_R(M/IM)$ the annihilator of the module $M/IM$ and by $\Gamma_I$ the $I$-torsion functor.  
First, we recall some known facts on generalized local cohomology modules.

\begin{lemma}\label{L21} {\rm(cf. \cite[Lemmas 2.1, 2.3]{CH})} The following statements are true.

\item[\quad$(i)$] Let $E^{\ch}$ be an injective resolution of $N$. Then, for any $j\geqslant 0$, we have \begin{alignat}{2}H^j_I(M,N)&\cong H^j(\Gamma_I(\Hom(M,E^{\ch})))\notag\\
&\cong H^j(\Hom(M,\Gamma_I(E^{\ch})))\cong H^j(\Hom(M,\Gamma_{I_M}(E^{\ch}))).\notag
\end{alignat}
\item[\quad$(ii)$] If $\Gamma_{I_M}(N)=N$ or $I\subseteq\ann(M)$, then $H^j_I(M,N)\cong\Ext^j_R(M,N)$ for all $j\geqslant 0$.
\end{lemma}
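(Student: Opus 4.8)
The plan is to reduce both parts to the defining formula $H^j_I(M,N)=\varinjlim_n\Ext^j_R(M/I^nM,N)$ together with the behaviour of the torsion functor under $\Hom_R(M,-)$ when $M$ is finitely generated. For part $(i)$ I would fix the given injective resolution $E^{\ch}$ of $N$, so that $\Ext^j_R(M/I^nM,N)=H^j(\Hom_R(M/I^nM,E^{\ch}))$ for every $n$. Since taking a direct limit over the directed set of the $n$ is exact, it commutes with cohomology, and hence $H^j_I(M,N)\cong H^j\bigl(\varinjlim_n\Hom_R(M/I^nM,E^{\ch})\bigr)$, the degree-$j$ cohomology of the complex $\varinjlim_n\Hom_R(M/I^nM,E^{\ch})$.

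Next I would identify this complex with $\Gamma_I(\Hom_R(M,E^{\ch}))$. For a fixed injective module $E$ the surjection $M\to M/I^nM$ induces a monomorphism $\Hom_R(M/I^nM,E)\hookrightarrow\Hom_R(M,E)$ with image $(0:_{\Hom_R(M,E)}I^n)=\{h:h(I^nM)=0\}$, and these monomorphisms are compatible with the transition maps of the system; taking the union over $n$ gives $\varinjlim_n\Hom_R(M/I^nM,E)=\Gamma_I(\Hom_R(M,E))$, which is the first isomorphism. For the second isomorphism I would use that, $M$ being finitely generated, $\Gamma_I(\Hom_R(M,X))=\Hom_R(M,\Gamma_I(X))$ for every $R$-module $X$: a map $h\colon M\to X$ has image inside $\Gamma_I(X)$ if and only if some $I^n$ kills $h(M)$ (here finite generation is used), which is precisely $I^nh=0$ in $\Hom_R(M,X)$. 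Finally, $I\subseteq I_M$ is immediate from $I_M=\ann(M/IM)$, and $I_MM\subseteq IM$ by definition of $I_M$, so $I_MM=IM$ and hence $I^nM=I_M^nM$ for all $n$; therefore the condition "$I^nM\subseteq\ker h$ for some $n$" is unchanged when $I$ is replaced by $I_M$, so the $I$-torsion and the $I_M$-torsion of $\Hom_R(M,E)$ agree, and applying the identity of the previous sentence to both $I$ and $I_M$ gives $\Hom_R(M,\Gamma_I(E^{\ch}))\cong\Hom_R(M,\Gamma_{I_M}(E^{\ch}))$ as complexes, which is the third isomorphism.

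For part $(ii)$, the case $I\subseteq\ann(M)$ is immediate: then $IM=0$, so $M/I^nM=M$ for all $n\geqslant1$ and the direct system defining $H^j_I(M,N)$ is constant with identity transition maps, whence $H^j_I(M,N)\cong\Ext^j_R(M,N)$. In the case $\Gamma_{I_M}(N)=N$ I would apply part $(i)$ in the form $H^j_I(M,N)\cong H^j(\Hom_R(M,\Gamma_{I_M}(E^{\ch})))$ and then show that $\Gamma_{I_M}(E^{\ch})$ is itself an injective resolution of $N$: each $\Gamma_{I_M}(E^i)$ is injective because the torsion functor preserves injectivity over a Noetherian ring; exactness of $0\to N\to\Gamma_{I_M}(E^0)\to\Gamma_{I_M}(E^1)$ follows from left exactness of $\Gamma_{I_M}$ together with $\Gamma_{I_M}(N)=N$; and $H^i(\Gamma_{I_M}(E^{\ch}))=H^i_{I_M}(N)=0$ for $i\geqslant1$ since the local cohomology of the $I_M$-torsion module $N$ vanishes above degree $0$ (its minimal injective resolution consists of $I_M$-torsion injectives). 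Consequently this resolution computes $\Ext^j_R(M,N)$, and the claim follows.

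Almost all of these steps are routine (exactness of direct limits, the elementary description of the images of the $\Hom$-monomorphisms, the inclusion $I\subseteq I_M$). The one step I expect to require genuine care is the last one in $(ii)$: verifying that $\Gamma_{I_M}(E^{\ch})$ is again an injective resolution of $N$ when $N$ is $I_M$-torsion — this uses both the vanishing of higher local cohomology of torsion modules and the stability of the class of injective modules under $\Gamma_{I_M}$, each valid because $R$ is Noetherian.
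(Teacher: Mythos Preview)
Your argument is correct. The paper does not supply its own proof of this lemma but merely cites \cite[Lemmas 2.1, 2.3]{CH}, so there is no in-paper proof to compare against; the route you take---identifying $\varinjlim_n\Hom_R(M/I^nM,E)$ with $\Gamma_I(\Hom_R(M,E))$, swapping $\Gamma_I$ and $\Hom_R(M,-)$ via finite generation of $M$, and then using $I^nM=I_M^nM$ to replace $I$ by $I_M$---is exactly the standard argument one expects in the cited reference, and your treatment of part~$(ii)$ via the observation that $\Gamma_{I_M}(E^{\ch})$ is again an injective resolution of $N$ when $N$ is $I_M$-torsion is likewise standard and sound.
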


\begin{lemma}\label{L22} {\rm(cf. \cite[Theorem 2.4]{CH})} Let $l=\depth(I_M,N)$. Then
$$\Ass H^l_I(M,N)=\Ass\Ext^l_R(M/IM,N).$$
\end{lemma}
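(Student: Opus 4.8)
The plan is to combine the Ext-comparison from Lemma \ref{L21} with the classical fact that for a maximal $N$-regular sequence inside an ideal $\mathfrak a$, the associated primes of $\Ext^{\grade(\mathfrak a,N)}_R(R/\mathfrak a,N)$ are exactly the associated primes of $N$ that contain $\mathfrak a$. First I would reduce to the case $l=0$ by passing to $N/\mathbf xN$, where $\mathbf x=x_1,\dots,x_l$ is a maximal $N$-regular sequence in $I_M$; this is the main technical move. Concretely, I would prove by induction on $l$ the shift isomorphism $H^l_I(M,N)\cong H^0_I(M,N/\mathbf xN)=\Gamma_{I_M}(N/\mathbf xN)$, using Lemma \ref{L21}(i) to express $H^j_I(M,N)$ via $\Hom(M,\Gamma_{I_M}(E^\ch))$ and the standard long exact sequence coming from $0\to N\xrightarrow{x_1} N\to N/x_1N\to 0$. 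The point is that $x_1$ is a nonzerodivisor on $N$ and $x_1\in I_M$ annihilates $M/IM$, so multiplication by $x_1$ is the zero map on each $H^j_I(M,N)$ (these modules are $I_M$-torsion by Lemma \ref{L21}(i)), which splits the long exact sequence into short exact sequences $0\to H^j_I(M,N)\to H^j_I(M,N/x_1N)\to H^{j+1}_I(M,N)\to 0$ in low degrees and forces $H^j_I(M,N)=0$ for $j<l$; an analogous shift holds for $\Ext^j_R(M/IM,N)$, being the local cohomology-type derived functors of $\Hom(M/IM,-)$.

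Granting the two shift isomorphisms, I would then be reduced to showing $\Ass\Gamma_{I_M}(N')=\Ass\Hom_R(M/IM,N')$ where $N'=N/\mathbf xN$, which is essentially formal: $\Gamma_{I_M}(N')$ consists of the elements of $N'$ killed by a power of $I_M$, and $\Hom_R(M/IM,N')\hookrightarrow\Hom_R(R/I_M,N')=\Gamma_{I_M}(N')$ via the surjection $R/I_M\twoheadrightarrow$ a direct sum of copies generating $M/IM$; one checks these two submodules of $N'$ have the same associated primes because both equal $\{\,\mathfrak p\in\Ass N' : I_M\subseteq\mathfrak p\,\}$-supported part. Here one uses that $\Supp(M/IM)=V(I_M)$, so a prime $\mathfrak p$ is in $\Ass\Hom(M/IM,N')$ iff $\operatorname{depth}(N'_{\mathfrak p})=0$ and $(M/IM)_{\mathfrak p}\neq 0$, i.e. $\mathfrak p\in\Ass N'$ and $I_M\subseteq\mathfrak p$; the same description holds for $\Ass\Gamma_{I_M}(N')$.

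The main obstacle is the bookkeeping in the inductive step: one must verify that reducing modulo $x_1$ really decreases both $\depth(I_M,N)$ and the relevant cohomological degree by exactly one, and that the connecting maps vanish simultaneously for the $H_I$-side and the $\Ext$-side so that the two shift isomorphisms are compatible. This hinges on the observation that every $H^j_I(M,N)$ and every $\Ext^j_R(M/IM,N)$ is annihilated by a power of $I_M$ — immediate from Lemma \ref{L21}(i) for the former, and clear for the latter since $I_M$ kills $M/IM$ — so multiplication by any $x_i\in I_M$ acts as zero and the long exact sequences break up as claimed. Once that is in place the result follows; note the special case $l=0$ recovers the inclusion/equality $\Ass H^0_I(M,N)=\Ass\Gamma_{I_M}(N)=\Ass\Hom(M/IM,N)$ directly from Lemma \ref{L21}.
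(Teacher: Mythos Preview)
The paper does not prove this lemma here; it is quoted from \cite[Theorem 2.4]{CH}. Evaluating your proposal on its own merits, there is a genuine gap.

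Your central claim, that multiplication by $x_i\in I_M$ acts as zero on $H^j_I(M,N)$, is false. Being $I_M$-torsion means each element is killed by \emph{some power} of $I_M$, not by $I_M$ itself; already for $M=R$ one has $H^j_I(R,N)=H^j_I(N)$, on which $x\in I$ is almost never the zero map. Hence the long exact sequence coming from $0\to N\xrightarrow{x_1}N\to N/x_1N\to 0$ does \emph{not} break into short exact sequences, and your shift isomorphism $H^l_I(M,N)\cong H^0_I(M,N/\mathbf xN)$ is false in general. (Concretely: with $R=k[x,y]$, $M=N=R$, $I=(x)$, one has $l=1$, $H^1_I(R)=R_x/R$, while $H^0_I(R/xR)=R/xR$; these are not isomorphic.) What the vanishing $H^j_I(M,N)=0$ for $j<l$ actually gives is only
\[
H^{l-1}_I(M,N/x_1N)\;\cong\;(0:_{H^l_I(M,N)}x_1),
\]
a proper submodule in general. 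Note also two smaller slips: $H^0_I(M,N')=\Hom_R(M,\Gamma_{I_M}(N'))$, not $\Gamma_{I_M}(N')$; and $\Hom_R(R/I_M,N')=(0:_{N'}I_M)$, not all of $\Gamma_{I_M}(N')$.

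Your strategy is salvageable, but it needs an extra step you did not supply: since every $\fr p\in\Ass H^l_I(M,N)$ contains $I_M\ni x_1$, one checks directly that $\Ass H^l_I(M,N)=\Ass\big((0:x_1)_{H^l_I(M,N)}\big)$, and then iterates. On the $\Ext$ side your shift \emph{is} a genuine isomorphism, because $I_M=\ann(M/IM)$ really does annihilate $\Ext^j_R(M/IM,N)$; so the two sides behave asymmetrically and the extra $(0:x)$ argument is needed only for $H_I$. With these corrections the induction goes through and the base case follows from the standard identity $\Ass\Hom_R(M,K)=\Supp(M)\cap\Ass(K)$.
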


\begin{lemma}\label{ya} {\rm(cf. \cite[Theorem 3.7]{Ya1})} If $\pd_R(M)<+\infty$, then $H^j_I(M,N)=0$ for all $j>\pd_R(M)+\dim(M\otimes_RN)$.
\end{lemma}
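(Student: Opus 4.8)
The plan is to argue by induction on $\dim_R N$, reducing at each step to modules $N'$ with $\dim_R N'<\dim_R N$ and $\dim(M\otimes_R N')\leqslant\dim(M\otimes_R N)$. Two standard facts drive the argument. The first is the coarse bound $H^j_I(M,N)=0$ for all $j>\pd_R M+\dim N$; this is immediate from the spectral sequence $E_2^{p,q}=\Ext^p_R(M,H^q_I(N))\Rightarrow H^{p+q}_I(M,N)$ together with $\Ext^p_R(M,-)=0$ for $p>\pd_R M$ and Grothendieck vanishing $H^q_I(N)=0$ for $q>\dim N$ (equivalently, by induction on $\pd_R M$, the case $M$ free being Grothendieck vanishing). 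The second is the elementary remark that $\ann_R M$ annihilates $H^j_I(M,X)$ for every $R$-module $X$: by the computation in Lemma \ref{L21}$(i)$ the module $H^j_I(M,X)$ is a subquotient of $\Hom_R(M,\Gamma_I(E^{\ch}))$ for $E^{\ch}$ an injective resolution of $X$, and $\Hom_R(M,-)$ is killed by $\ann_R M$. Consequently, if $x\in\ann_R M$ is a nonzerodivisor on $X$, multiplication by $x$ acts as zero on $H^{\ch}_I(M,X)$, so the long exact sequence of $0\to X\xrightarrow{x}X\to X/xX\to 0$ collapses to short exact sequences
\[
0\longrightarrow H^j_I(M,X)\longrightarrow H^j_I(M,X/xX)\longrightarrow H^{j+1}_I(M,X)\longrightarrow 0 .
\]

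First I would settle the base case $\dim_R N\leqslant 0$. If $N=0$ or $M=0$ everything vanishes; otherwise $N$ has finite length, hence is $I_M$-torsion (as $I_M\subseteq\fr m$), so $H^j_I(M,N)\cong\Ext^j_R(M,N)$ by Lemma \ref{L21}$(ii)$, which is $0$ for $j>\pd_R M$, and $M\otimes_R N\neq 0$ forces $\dim(M\otimes_R N)=0$, matching the asserted bound. For the inductive step put $d=\dim(M\otimes_R N)$, $L=\Gamma_{\ann_R M}(N)$ and $\bar N=N/L$, and use the long exact sequence of $0\to L\to N\to\bar N\to 0$. Since $\Supp L\subseteq V(\ann_R M)\cap\Supp N=\Supp(M\otimes_R N)$ we have $\dim L\leqslant d$, so the coarse bound gives $H^j_I(M,L)=0$ for $j>\pd_R M+d$. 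If $\bar N=0$ we are done; otherwise $\Gamma_{\ann_R M}(\bar N)=0$, so $\ann_R M$ (which lies in $\fr m$) contains a $\bar N$-nonzerodivisor $x$, and then $\dim(\bar N/x\bar N)=\dim\bar N-1<\dim_R N$, while $M\otimes_R(\bar N/x\bar N)$ is a quotient of $M\otimes_R N$, hence of dimension $\leqslant d$. The inductive hypothesis gives $H^j_I(M,\bar N/x\bar N)=0$ for $j>\pd_R M+d$, and the displayed short exact sequence then forces $H^j_I(M,\bar N)=0$ in that range. Feeding both vanishing statements into the long exact sequence of $0\to L\to N\to\bar N\to 0$ completes the induction.

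The hard part will be arranging the reduction so that it genuinely lowers $\dim_R N$ while keeping $\dim(M\otimes_R N)$ under control: stripping off a nonzerodivisor of $N$ lowers $\dim_R N$ but need not lower $\dim(M\otimes_R N)$, and the torsion submodule $L=\Gamma_{\ann_R M}(N)$ may have the same dimension as $N$, so it cannot be handed back to the induction and must instead be absorbed by the coarse bound — which is precisely why that bound (equivalently, the spectral sequence or an induction on $\pd_R M$) is needed as a separate input. It is also essential that the nonzerodivisor $x$ be taken in $\ann_R M$ rather than in $I$ or $I_M$: only then does multiplication by $x$ kill the generalized local cohomology, so the connecting maps vanish and the long exact sequence splits into the clean short exact sequences above. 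The remaining verifications are routine: that $\Gamma_{\ann_R M}(\bar N)=0$ together with $M\neq 0$ yields a nonzerodivisor $x\in\ann_R M$ on $\bar N$ (prime avoidance and Nakayama), and that such an $x$ drops $\dim\bar N$ by exactly one since it misses the minimal primes of $\Supp\bar N$.
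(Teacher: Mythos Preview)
The paper does not supply its own proof of this lemma: it is quoted as \cite[Theorem~3.7]{Ya1} and used as a black box. So there is no argument in the paper to compare yours against; what matters is whether your proof stands on its own, and it does.

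Your argument is sound. The two ingredients --- the coarse bound $H^j_I(M,N)=0$ for $j>\pd_R(M)+\dim N$, obtained from the Grothendieck spectral sequence $\Ext^p_R(M,H^q_I(N))\Rightarrow H^{p+q}_I(M,N)$ (which exists because $\Gamma_I$ preserves injectives over a Noetherian ring, so Lemma~\ref{L21}(i) applies), and the observation that $\ann_R M$ kills $H^{\ch}_I(M,-)$ --- are correct and do exactly the work you claim. The induction on $\dim N$ is set up properly: the torsion piece $L=\Gamma_{\ann_RM}(N)$ is disposed of by the coarse bound since $\Supp L\subseteq\Supp(M\otimes_RN)$, and on the torsion-free quotient $\bar N$ a nonzerodivisor $x\in\ann_RM$ exists by prime avoidance (your parenthetical ``Nakayama'' is not really needed here beyond $M\neq 0\Rightarrow\ann_RM\subseteq\fr m$), after which $\dim(\bar N/x\bar N)<\dim N$ while $M\otimes_R(\bar N/x\bar N)\cong M\otimes_R\bar N$ is a quotient of $M\otimes_RN$. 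The collapse of the long exact sequence to short exact sequences, forced by $x\cdot H^{\ch}_I(M,\bar N)=0$, then finishes the step cleanly.

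One small presentational point: you do not actually need $\dim(\bar N/x\bar N)=\dim\bar N-1$ exactly, only the strict inequality, which is automatic for a nonzerodivisor in $\fr m$; so the final sentence about minimal primes is true but stronger than necessary.
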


\begin{lemma} \label{L202} {\rm (cf. \cite[Lemma 3.1]{HZ})} Let  $d=\dim(R)$. If $\pd_R(M)<+\infty$, then $\dim(\Ext^j_R(M,R))\leqslant d-j$ for all $0\leqslant j\leqslant\pd_R(M)$.
\end{lemma}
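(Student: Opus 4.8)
\emph{Proof sketch.} The plan is to pass to localizations and read the bound off from the Auslander--Buchsbaum equality. Put $p=\pd_R(M)$ and fix $j$ with $0\leqslant j\leqslant p$. Using
$$\dim\big(\Ext^j_R(M,R)\big)=\sup\{\dim(R/\fr p):\fr p\in\Supp(\Ext^j_R(M,R))\},$$
it is enough to show that $\dim(R/\fr p)\leqslant d-j$ for every prime ideal $\fr p$ with $\Ext^j_R(M,R)_{\fr p}\neq 0$ (if the module is zero the assertion is trivial).

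First I would fix such a prime $\fr p$ and use that $\Ext$ commutes with localization for finitely generated modules over a Noetherian ring, so $0\neq\Ext^j_R(M,R)_{\fr p}\cong\Ext^j_{R_{\fr p}}(M_{\fr p},R_{\fr p})$. In particular $M_{\fr p}\neq 0$, i.e. $\fr p\in\Supp(M)$, and $\pd_{R_{\fr p}}(M_{\fr p})\leqslant\pd_R(M)<+\infty$; moreover the non-vanishing of this $\Ext^j$ forces $j\leqslant\pd_{R_{\fr p}}(M_{\fr p})$. Next I would apply the Auslander--Buchsbaum formula over the Noetherian local ring $R_{\fr p}$ to the module $M_{\fr p}$ of finite projective dimension:
$$\pd_{R_{\fr p}}(M_{\fr p})=\depth(R_{\fr p})-\depth(M_{\fr p})\leqslant\depth(R_{\fr p})\leqslant\dim(R_{\fr p})=\htt(\fr p),$$
so that $j\leqslant\htt(\fr p)$.

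Finally I would invoke the standard inequality $\htt(\fr p)+\dim(R/\fr p)\leqslant\dim(R)=d$, valid in any Noetherian local ring, to get $\dim(R/\fr p)\leqslant d-\htt(\fr p)\leqslant d-j$, which is exactly what we want; taking the supremum over all such $\fr p$ gives $\dim(\Ext^j_R(M,R))\leqslant d-j$. I do not expect a genuine obstacle: the only points deserving care are that $M_{\fr p}$ is really nonzero (so that the Auslander--Buchsbaum formula is applicable and $\pd_{R_{\fr p}}(M_{\fr p})$ is a well-defined integer) and that all invariants occurring are finite, and both follow at once from $\fr p\in\Supp(M)$ together with $\pd_R(M)<+\infty$. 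A less streamlined alternative would be to dualize a minimal free resolution of $M$ into $R$ and bound the dimensions of the cohomology modules of the resulting cochain complex, but since that complex need not be exact this route is more cumbersome than the localization argument above.
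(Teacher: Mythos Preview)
Your argument is correct. The paper itself does not prove this lemma: it is stated with a reference to \cite[Lemma 3.1]{HZ} and used as a black box, so there is no in-paper proof to compare against. Your localization plus Auslander--Buchsbaum approach is exactly the standard route to this inequality (and is essentially how Herzog and Zamani argue). The only points to watch, which you already flag, are that $\fr p\in\Supp(\Ext^j_R(M,R))$ forces $\fr p\in\Supp(M)$ so that $M_{\fr p}\neq 0$ and the Auslander--Buchsbaum formula applies, and that $\htt(\fr p)+\dim(R/\fr p)\leqslant d$ holds in any Noetherian local ring by concatenating saturated chains; both are routine.
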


\begin{lemma}\label{rm3} Assume that the local ring homomorphism $f: R\mt S$ is flat. Then $H^j_I(M,N)\otimes_RS\cong H^j_{IS}(M\otimes_RS,N\otimes_RS)$ for all $j\geqslant 0$.
\end{lemma}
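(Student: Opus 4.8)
The plan is to argue directly from Herzog's definition $H^j_I(M,N)=\varinjlim_n\Ext^j_R(M/I^nM,N)$ and to reduce the statement to two standard facts: tensoring with a flat module commutes with direct limits, and Ext of a finitely presented module commutes with flat base change. Since $R$ is Noetherian and $M$ is finitely generated, each $M/I^nM$ is finitely presented, so both ingredients apply.

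First I would record the flat base change isomorphism for Ext. Let $L$ be a finitely presented $R$-module and pick a resolution $F_{\ch}\to L$ by finitely generated free $R$-modules. Since $S$ is flat, $F_{\ch}\otimes_RS\to L\otimes_RS$ is a free resolution of $L\otimes_RS$ over $S$, and for a finitely generated free module $F$ there is the canonical identification $\Hom_R(F,N)\otimes_RS\cong\Hom_S(F\otimes_RS,N\otimes_RS)$. Applying $-\otimes_RS$ to the complex $\Hom_R(F_{\ch},N)$ and using flatness to commute $-\otimes_RS$ with cohomology yields a natural isomorphism $\Ext^j_R(L,N)\otimes_RS\cong\Ext^j_S(L\otimes_RS,N\otimes_RS)$ for every $j\geqslant 0$.

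Next I would identify the base change of the system $\{M/I^nM\}_n$. Flatness turns the inclusion $I^nM\hookrightarrow M$ into an inclusion after applying $-\otimes_RS$, whose image is $I^n(M\otimes_RS)=(IS)^n(M\otimes_RS)$; hence $(M/I^nM)\otimes_RS\cong(M\otimes_RS)\big/(IS)^n(M\otimes_RS)$, compatibly with the natural surjections as $n$ increases. Combining this with the base change isomorphism for Ext gives, for each $n$ and $j$, a natural isomorphism
$$\Ext^j_R(M/I^nM,N)\otimes_RS\;\cong\;\Ext^j_S\big((M\otimes_RS)/(IS)^n(M\otimes_RS),\,N\otimes_RS\big).$$
Finally, taking the direct limit over $n$ and using that $-\otimes_RS$ commutes with $\varinjlim$, the left side becomes $H^j_I(M,N)\otimes_RS$ and the right side becomes $H^j_{IS}(M\otimes_RS,N\otimes_RS)$, which is the assertion.

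The only point needing care is naturality: one must verify that each isomorphism above is compatible with the transition maps of the direct systems (those induced by $I^{n+1}M\subseteq I^nM$), so that the isomorphisms assemble into an isomorphism of the limits. This is routine because every isomorphism used is the canonical one, but it is the step I would spell out most carefully. I expect no deeper obstacle; flatness of $f$ — not faithful flatness — is all that is used.
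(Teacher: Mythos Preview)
The paper states this lemma without proof or citation, treating it as a standard fact among the preliminaries. Your argument is correct and is exactly the expected one: flat base change for $\Ext$ of finitely presented modules (available since $R$ is Noetherian and each $M/I^nM$ is finitely generated), the identification $(M/I^nM)\otimes_RS\cong(M\otimes_RS)/(IS)^n(M\otimes_RS)$, and commutation of $-\otimes_RS$ with direct limits, together with naturality in $n$; there is nothing further in the paper to compare against.
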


\begin{lemma}\label{rm1} Let $n=\dim(N)$. Then $\Supp(H^{n-1}_I(N))$ is a finite set.
\end{lemma}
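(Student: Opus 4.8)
The plan is to reduce to known finiteness results by a dimension/support argument. First I would observe that the statement is really about ordinary local cohomology, so the generalized setup plays no role here. If $n=\dim N \leqslant 1$ the claim is immediate (for $n=0$ the module is zero; for $n=1$, $H^0_I(N)$ is a submodule of a finitely generated module, hence finitely generated, hence has finite support), so I would assume $n\geqslant 2$. The Grothendieck vanishing theorem gives $H^j_I(N)=0$ for $j>n$, and by a theorem of Huneke--Koh / Marley (see the references cited in the introduction, e.g.\ \cite{Mar}) the top local cohomology $H^n_I(N)$ has a finitely generated, hence finite, set of associated primes; but what we actually want is $\Supp$ of the next-to-top one.

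The key step is a localization argument. Fix a prime $\fr p \in \Supp(H^{n-1}_I(N))$ with $\fr p \neq \fr m$; I want to show there are only finitely many such. Localizing at $\fr p$ gives $(H^{n-1}_I(N))_{\fr p} \cong H^{n-1}_{I R_{\fr p}}(N_{\fr p}) \neq 0$, and since $\dim N_{\fr p} \leqslant \dim N - 1 = n-1$ (as $\fr p$ is not maximal and $\fr p \in \Supp N$), Grothendieck vanishing forces $\dim N_{\fr p} = n-1$ and $H^{n-1}_{IR_{\fr p}}(N_{\fr p})$ to be the \emph{top} local cohomology of $N_{\fr p}$. Hence $\fr p \in \Supp N$ with $\dim R/\fr p \geqslant \dim N - \dim N_{\fr p} \geqslant n-(n-1)=1$, and moreover $\fr p$ must be a minimal prime of $\Supp N$ of coheight $1$... more precisely $\fr p$ lies in $\Supp N$ with $\dim N_{\fr p}=n-1$, which pins $\fr p$ down to finitely many candidates if $n-1 = \dim N$, i.e.\ the minimal primes $\fr q$ of $\Supp N$ with $\dim R/\fr q = 1$ together with the non-maximal primes lying over them. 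The set $\{\fr p : \dim N_{\fr p} = n-1,\ \fr p \neq \fr m\}$ is exactly $\{\fr p : \dim R/\fr p \leqslant 1\} \cap \Supp N \setminus \{\fr m\}$ intersected with the condition on $N_{\fr p}$; this set has dimension $\leqslant 1$, so it is a finite union of closed points and irreducible curves, and removing $\fr m$ leaves only: finitely many closed points (the generic points of the $1$-dimensional components, plus embedded/isolated points of $\Supp N$ at the appropriate level). So $\Supp(H^{n-1}_I(N)) \setminus \{\fr m\}$ is finite, and adding back $\fr m$ keeps it finite.

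The main obstacle, and the place to be careful, is justifying that $\{\fr p \in \Supp N : \dim N_{\fr p}=n-1,\ \fr p\neq\fr m\}$ is finite: this is where I would invoke that $\Supp N$ is a finite union of irreducible closed sets $V(\fr q_i)$ (the $\fr q_i$ minimal over $\ann N$), that $\dim N_{\fr p} = \max_i \{\dim R_{\fr p}/\fr q_i R_{\fr p} : \fr q_i \subseteq \fr p\}$, and that for $\fr p \neq \fr m$ the condition $\dim N_{\fr p}=n-1$ forces $\operatorname{coht}\fr p = \dim R/\fr p \leqslant 1$, leaving at most finitely many primes on each $1$-dimensional component $V(\fr q_i)$ (namely its generic point $\fr q_i$ itself). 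Combined with $(H^{n-1}_I(N))_{\fr p}\cong H^{n-1}_{IR_{\fr p}}(N_{\fr p})$ (flat base change for local cohomology, the special case of Lemma \ref{rm3} with $M=R$) and Grothendieck vanishing, this gives the result. An alternative, perhaps cleaner, route is to use the known finiteness of $\Ass H^n_I(N)$ on $R/\fr q$-quotients or a Mayer--Vietoris/long-exact-sequence induction on the number of components of $\Supp N$; I would keep that in reserve if the direct support count gets unwieldy.
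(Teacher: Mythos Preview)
Your localization idea is sound up to the point where you conclude that any non-maximal $\fr p\in\Supp(H^{n-1}_I(N))$ satisfies $\dim N_{\fr p}=n-1$; that step is correct. The gap is the next one: you assert that the set $\{\fr p\in\Supp N:\dim N_{\fr p}=n-1,\ \fr p\neq\fr m\}$ is finite, and this is simply false in general. Take $R=k[[x,y,z]]$, $N=R$, $n=3$: the primes with $\dim R_{\fr p}=2$ are exactly the height-$2$ primes, and there are infinitely many of them. Your claim that ``$\dim N_{\fr p}=n-1$ forces $\dim R/\fr p\leqslant 1$'' gives infinitely many primes, not finitely many, and the further assertion that on each component only the generic point survives is not what happens either. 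What actually cuts the set down is the \emph{nonvanishing} condition $H^{n-1}_{IR_{\fr p}}(N_{\fr p})\neq 0$: this top local cohomology is zero for most height-$(n-1)$ primes (for instance by Hartshorne--Lichtenbaum type vanishing it forces $\fr p$ to lie over a minimal prime of $I+\ann N$ in the right way), and it is precisely this extra information that your argument never uses.

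The paper bypasses all of this. It passes to $\overline R=R/\ann N$, so that $\dim\overline R=n$, uses the independence theorem to identify $H^{n-1}_I(N)$ with $H^{n-1}_{I\overline R}(N)$, and then quotes Marley's result \cite[Corollary~2.5]{Mar} that over a ring of dimension $n$ the module $H^{n-1}_J(N)$ has finite support. In other words, the substance of the lemma is already in Marley; the paper's contribution here is only the (one-line) reduction from $\dim N=n$ to $\dim R=n$. If you want a self-contained argument you would have to reproduce Marley's proof, which does require more than a crude dimension count on $\Supp N$.
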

\begin{proof} Let $\fr a=\ann_R(N)$ and $\overline R=R/\fr a$, then $\dim(\overline R)=n$ and $N$ is an $\overline R$-module. Hence, by the independence theorem in \cite{Bs}, we have $H^{n-1}_I(N)\cong H^{n-1}_{I\overline R}(N)$ as $R$-modules. By \cite[Corollary 2.5]{Mar}, we obtain that $\Supp_{\overline R}(H^{n-1}_{I\overline R}(N))$ is finite. On the other hand, we have $\Supp(H^{n-1}_I(N))\subseteq\Supp(R/\fr a)$ and
$$\Supp_{\overline R}(H^{n-1}_{I\overline R}(N))=\{\fr p/\fr a\mid\fr p\in\Supp(H^{n-1}_I(N))\}.$$Therefore $\Supp(H^{n-1}_I(N))$ is a finite finite, as required.
\end{proof}

The next two lemmata are  impotant for our further investication in this paper.

\begin{lemma}\label{L23} Let $\Bbb N$ be the set of all positive integers and  $i\in\Bbb N\cup\{+\infty\}$. Set  $J_i=\bigcap_{j<i}\ann(\Ext^j_R(M/IM,N))$. Then $H^j_I(M,N)\cong H^j_{J_i}(M,N)$ for all $j<i.$
\end{lemma}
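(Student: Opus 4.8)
The plan is to use the characterization of generalized local cohomology from Lemma \ref{L21}(i) in terms of the torsion functor applied to $\Hom(M, E^{\ch})$, where $E^{\ch}$ is an injective resolution of $N$. The key observation is that $H^j_I(M,N) \cong H^j(\Gamma_I(\Hom(M,E^{\ch})))$, and the formation of cohomology only ``sees'' the complex in degrees near $j$. So for $j < i$, I expect that the submodule $\Gamma_I$ picks out, and that $\Gamma_{J_i}$ picks out, the same thing up to the relevant degrees. First I would recall that for any $R$-module $X$ and any ideal $\fr b$, $\Gamma_{\fr b}(X) = \{x \in X : \fr b^n x = 0 \text{ for some } n\}$, so that if $\fr b_1$ and $\fr b_2$ have the same radical then $\Gamma_{\fr b_1} = \Gamma_{\fr b_2}$; more generally the inclusion $\Gamma_{\fr b_1} \subseteq \Gamma_{\fr b_2}$ holds whenever $\rad(\fr b_2) \subseteq \rad(\fr b_1)$.

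Since $J_i = \bigcap_{j<i} \ann(\Ext^j_R(M/IM,N))$ and each $\Ext^j_R(M/IM,N)$ is annihilated by $I_M = \ann(M/IM)$ (hence by $I$ up to radical via $I \subseteq I_M$), we have $I \subseteq \rad(I_M) \subseteq \rad(J_i)$, so $\Gamma_{J_i} \subseteq \Gamma_I$ as subfunctors of the identity. Thus there is a natural map $H^j_{J_i}(M,N) \to H^j_I(M,N)$ for every $j$; it remains to show it is an isomorphism for $j < i$. To do this I would argue inductively on $j < i$. The base case and inductive step both rest on the following: the cohomology $H^j(\Gamma_I(\Hom(M,E^{\ch})))$ in degree $j$ is built from the modules $\Ext^0_R(M/IM,N), \dots, \Ext^j_R(M/IM,N)$ in the sense that, by Lemma \ref{L24} (which the paper states is available and gives $\bigcup_{j' \le j}\Supp(H^{j'}_I(M,N)) = \bigcup_{j' \le j}\Supp(\Ext^{j'}_R(M/IM,N))$), one controls these modules by the annihilators $\ann(\Ext^{j'}_R(M/IM,N))$ for $j' \le j < i$, each of which contains $J_i$. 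Concretely, both $H^j_I(M,N)$ and $H^j_{J_i}(M,N)$ are annihilated by $J_i$ for $j<i$: for $H^j_I(M,N)$ this follows because its support is contained in $\bigcup_{j' \le j}\Supp(\Ext^{j'}_R(M/IM,N)) = \Supp(R/J_i^{(j)})$ where $J_i^{(j)} = \bigcap_{j'\le j}\ann(\Ext^{j'}_R(M/IM,N)) \supseteq J_i$, and a finitely generated-ish torsion argument then forces $J_i$ (up to radical) to annihilate it.

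The main obstacle I anticipate is the passage from ``$J_i$ kills the support'' to ``$J_i$ (some power of it) actually annihilates the module'': generalized local cohomology modules need not be finitely generated, so one cannot immediately conclude that a module supported inside $V(J_i)$ is $J_i$-torsion. I would circumvent this by working with the torsion-functor description directly: set $K^{\ch} = \Gamma_I(\Hom(M,E^{\ch}))$ and observe that each term of $K^{\ch}$ is $I$-torsion, and one shows by the long exact sequences coming from the filtration by $J_i^n$ that the subcomplex $\Gamma_{J_i}(K^{\ch}) \subseteq K^{\ch}$ induces an isomorphism on $H^j$ for $j < i$, using that $\Gamma_{J_i}(\Hom(M,E^{\ch})) = \Gamma_{J_i}(\Gamma_I(\Hom(M,E^{\ch})))$ since $J_i \supseteq$ (a power related to) $I$. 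Alternatively, and more cleanly, I would replace the infinite case $i = +\infty$ and the finite cases uniformly by noting that truncating $i$ to any finite $i' \le i$ only shrinks the problem, so it suffices to treat finite $i$, and then run a downward induction comparing $H^j_I$ with $H^j_{J_i}$ using the Ext-description and the fact that for $j<i$ the relevant $\Ext$-modules are all $J_i$-torsion by construction of $J_i$, hence the direct limit $\varinjlim_n \Ext^j_R(M/I^nM,N)$ is unchanged when $I$ is replaced by $J_i$ in degrees below $i$. The delicate point throughout is keeping track of which degrees of the injective resolution influence $H^j$, and I expect the cleanest route is the complex-level argument with $\Gamma_{J_i}(\Gamma_I(-)) = \Gamma_{J_i}(-)$ rather than any statement about annihilators of the cohomology itself.
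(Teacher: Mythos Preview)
Your proposal has a genuine gap and, in one place, a circularity. You invoke Lemma~\ref{L24} to control $\Supp(H^j_I(M,N))$ for $j<i$, but in the paper Lemma~\ref{L24} is \emph{deduced from} Lemma~\ref{L23}; you cannot use it here. More importantly, none of your three suggested routes is actually carried out. The observation $\Gamma_{J_i}(\Gamma_I(-))=\Gamma_{J_i}(-)$ is correct but only gives one direction of what you need; the filtration by powers of $J_i$ and the claim that ``$\varinjlim_n \Ext^j_R(M/I^nM,N)$ is unchanged when $I$ is replaced by $J_i$ in degrees below $i$'' are precisely restatements of the lemma, not arguments for it. The obstacle you yourself flag---that $H^j_I(M,N)$ is not finitely generated, so support control does not yield torsion control---is never resolved.

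The missing idea is to work with a \emph{minimal} injective resolution $E^\bullet$ of $N$ and use the Matlis decomposition $\Gamma_{\fr a}(E^j)=\bigoplus_{\fr a\subseteq\fr p}E(R/\fr p)^{\mu^j(\fr p,N)}$. Since $I_M\subseteq J_i$, one has $\Gamma_{J_i}(E^j)\subseteq\Gamma_{I_M}(E^j)$, and equality for $j<i$ reduces to showing: if $\fr p\supseteq I_M$ but $\fr p\not\supseteq J_i$, then $\mu^j(\fr p,N)=0$ for $j<i$. Now $\fr p\not\supseteq J_i$ forces $\Ext^l_R(M/IM,N)_{\fr p}=0$ for all $l<i$, hence $\depth((I_M)_{\fr p},N_{\fr p})\geqslant i$, so $\depth(N_{\fr p})\geqslant i>j$ and thus $\mu^j(\fr p,N)=0$. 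Once $\Gamma_{I_M}(E^j)=\Gamma_{J_i}(E^j)$ for $j<i$, the inclusion of complexes $\Hom(M,\Gamma_{J_i}(E^\bullet))\hookrightarrow\Hom(M,\Gamma_{I_M}(E^\bullet))$ is an equality in degrees $<i$, and this suffices to identify $H^j$ for all $j<i$ (including $j=i-1$, since the kernel of the differential out of degree $i-1$ is computed inside the common term). This Bass-number argument is exactly what your proposal lacks.
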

\begin{proof}We note first that $I_M\subseteq J_i$. Let  $E^{\bullet}: 0\mt E^0\mt\cdots\mt E^j\mt\cdots$ be a minimal injective resolution of $N.$ For any $j\geqslant 0$, we have by \cite[10.1.10]{Bs} that
$$\Gamma_{I_M}(E^j)=\bigoplus_{I_M\subseteq\fr p\in\Ass(E^j)}E(R/\fr p)^{\mu^j(\fr p,N)}$$
and
$$\Gamma_{J_i}(E^j)=\bigoplus_{J_i\subseteq\fr p\in\Ass(E^j)}E(R/\fr p)^{\mu^j(\fr p,N)},$$
where $\mu^j(\fr p,N)=\dim_{k(\fr p)}(\Ext^j_{R_{\fr p}}(k(\fr p),N_{\fr p}))$ is $j$th Bass number of $N$ with respect to $\fr p$.  Note that for any $\fr p\in\Ass(E^j)$ the sequence
$0\mt E^0_{\fr p}\mt E^1_{\fr p}\mt\cdots\mt E^j_{\fr p}\mt\cdots$
 is a minimal injective resolution of $N_{\fr p}$ (cf. \cite[11.1.6]{Bs}). So, as $E^j_{\fr p}\not=0$, $N_{\fr p}\not=0$. Consider now two cases: $i\in\Bbb N$ and $i=+\infty$.

Let $i\in\Bbb N$. For any $j<i$, and any $\fr p\in\Ass(E^j)$ satisfying $\fr p\supseteq I_M$ and $\fr p\nsupseteq J_i$, we have $\Ext^l_R(M/IM,N)_{\fr p}=0$ for all $l< i$. It implies $\depth((I_M)_{\fr p},N_{\fr p})\geqslant i,$ so $\depth(N_{\fr p})\geqslant i.$ Thus $\mu^j(\fr p,N)=0$, so that $\Gamma_{I_M}(E^j)=\Gamma_{J_i}(E^j)$. Hence, by Lemma \ref{L21}, we get $H^j_I(M,N)\cong H^j_{J_i}(M,N)$ for all $j< i$. 

Finally, if $i=+\infty$, then $J_i=\bigcap_{j\geqslant 0}\ann(\Ext^j_R(M/IM,N))$. For any $j\geqslant 0$, and any $\fr p\in\Ass(E^j)$ such that $\fr p\supseteq I_M$, we obtain $(I_M)_{\fr p}N_{\fr p}\not=N_{\fr p}.$ Set $\nu=\depth((I_M)_{\fr p},N_{\fr p})$, then  $\nu<+\infty$ and $\fr p\in\Supp(\Ext^{\nu}_R(M/IM,N))$. It follows that  $\fr p\supseteq\ann(\Ext^{\nu}_R(M/IM,N))\supseteq J_i,$ and hence $\Gamma_{I_M}(E^j)=\Gamma_{J_i}(E^j)$. Therefore, by Lemma \ref{L21}, we obtain $H^j_I(M,N)\cong H^j_{J_i}(M,N)$ for all $j\geqslant 0$.
\end{proof}

\begin{lemma}\label{L24} Let $i\in\Bbb N\cup\{+\infty\}$. Then we have $$\bigcup_{j<i}\Supp (H^j_I(M,N))=\bigcup_{j<i}\Supp(\Ext^j_R(M/IM,N)).$$
\end{lemma}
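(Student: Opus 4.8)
The plan is to prove the two inclusions separately, using Lemma~\ref{L23} as the main engine. The key observation is that for $i\in\Bbb N\cup\{+\infty\}$ and $J_i=\bigcap_{j<i}\ann(\Ext^j_R(M/IM,N))$, Lemma~\ref{L23} gives $H^j_I(M,N)\cong H^j_{J_i}(M,N)$ for all $j<i$; in particular $J_i\subseteq\ann(H^j_I(M,N))$ whenever $j<i$, so that $\Supp(H^j_I(M,N))\subseteq\Supp(R/J_i)=\bigcup_{l<i}\Supp(\Ext^l_R(M/IM,N))$. Taking the union over $j<i$ yields the inclusion $\bigcup_{j<i}\Supp(H^j_I(M,N))\subseteq\bigcup_{j<i}\Supp(\Ext^j_R(M/IM,N))$.

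For the reverse inclusion I would argue by localizing at a prime $\fr p$ in the right-hand union and reducing to the case where $N_{\fr p}\neq 0$ and $(I_M)_{\fr p}N_{\fr p}\neq N_{\fr p}$ (if $(I_M)_{\fr p}N_{\fr p}=N_{\fr p}$ then all the $\Ext^j_R(M/IM,N)_{\fr p}$ vanish). By Lemma~\ref{rm3} applied to the flat map $R\to R_{\fr p}$, we have $H^j_I(M,N)_{\fr p}\cong H^j_{IR_{\fr p}}(M_{\fr p},N_{\fr p})$, and similarly $\Ext^j_R(M/IM,N)_{\fr p}\cong\Ext^j_{R_{\fr p}}(M_{\fr p}/IM_{\fr p},N_{\fr p})$, so it suffices to work over $R_{\fr p}$. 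There, set $l=\depth((I_{M})_{\fr p},N_{\fr p})<\infty$; Lemma~\ref{L22} (applied over $R_{\fr p}$) gives $\Ass H^l_{IR_{\fr p}}(M_{\fr p},N_{\fr p})=\Ass\Ext^l_{R_{\fr p}}(M_{\fr p}/IM_{\fr p},N_{\fr p})$, and by the standard fact that $\Ext^j$ vanishes below the depth while $\Ext^l\neq 0$, the maximal ideal $\fr p R_{\fr p}$ lies in this common set of associated primes precisely when $l<i$ — and $l<i$ always holds here because $\fr p\in\Supp(\Ext^j_R(M/IM,N))$ for some $j<i$ forces $l\le j<i$. Hence $H^l_{IR_{\fr p}}(M_{\fr p},N_{\fr p})\neq 0$, i.e. $\fr p\in\Supp(H^l_I(M,N))$ with $l<i$, which puts $\fr p$ in the left-hand union.

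The main obstacle is the bookkeeping around the index $i$: one must check carefully that the depth $l=\depth((I_M)_{\fr p},N_{\fr p})$ is genuinely $<i$ in all cases (including $i=+\infty$, where it is automatic since the depth is finite once $N_{\fr p}\neq 0$ and $(I_M)_{\fr p}N_{\fr p}\neq N_{\fr p}$), so that the nonvanishing $H^l_I(M,N)_{\fr p}\neq 0$ lands in the correct range of the union. A secondary technical point is ensuring Lemma~\ref{L22} is legitimately applicable after localization, which is fine since $R_{\fr p}$ is again Noetherian local and $M_{\fr p},N_{\fr p}$ are finitely generated; one should also note $\Ext^j_R(M/IM,N)_{\fr p}\cong\Ext^j_{R_{\fr p}}((M/IM)_{\fr p},N_{\fr p})$ and $(M/IM)_{\fr p}\cong M_{\fr p}/(IR_{\fr p})M_{\fr p}$, both standard. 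Once these are in place the two inclusions combine to give the claimed equality.
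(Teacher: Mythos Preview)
Your argument is correct in substance, and the first inclusion is handled exactly as in the paper (via Lemma~\ref{L23} and the containment of supports in $V(J_i)$). For the reverse inclusion, however, your route differs from the paper's: you localize and invoke Lemma~\ref{L22} to transfer nonvanishing of $\Ext^l$ to nonvanishing of $H^l_I$, whereas the paper argues directly from the short exact sequence $0\to IM/I^nM\to M/I^nM\to M/IM\to 0$, observing that $\Ext^j_R(IM/I^nM,N)_{\fr p}=0$ for $j<\nu$ (since $I_M\subseteq\sqrt{\ann(IM/I^nM)}$), to produce an injection $\Ext^{\nu}_R(M/IM,N)_{\fr p}\hookrightarrow\Ext^{\nu}_R(M/I^nM,N)_{\fr p}$ for every $n$, and then passes to the direct limit to get $\Ext^{\nu}_R(M/IM,N)_{\fr p}\hookrightarrow H^{\nu}_I(M,N)_{\fr p}$. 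Your approach is slicker but imports more (Lemma~\ref{L22} is itself a cited result from \cite{CH}); the paper's is more self-contained at the cost of a small extra computation.

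Two minor points worth cleaning up. First, the clause ``the maximal ideal $\fr p R_{\fr p}$ lies in this common set of associated primes precisely when $l<i$'' is garbled --- the condition $l<i$ has nothing to do with whether $\fr pR_{\fr p}$ is an associated prime; all you need (and all that follows from Lemma~\ref{L22}) is that $\Ext^l\neq 0$ forces $\Ass(H^l)=\Ass(\Ext^l)\neq\emptyset$, hence $H^l\neq 0$. Second, Lemma~\ref{rm3} as stated hypothesizes a \emph{local} flat ring homomorphism, which $R\to R_{\fr p}$ is not in general; the compatibility of $H^j_I(M,N)$ with arbitrary localization is still true and routine, but you should either justify it separately or note that the proof of Lemma~\ref{rm3} does not actually use locality.
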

\begin{proof} Let $i\in\Bbb N\cup\{+\infty\}$, and $J_i=\bigcap_{j<i}\ann(\Ext^j_R(M/IM,N))$. Then, by Lemma \ref{L23}, we obtain $H^j_I(M,N)\cong H^j_{J_i}(M,N)$ for all $j< i$. So, we have 
$$\bigcup_{j<i}\Supp (H^j_I(M,N))\subseteq\Supp(R/J_i)=\bigcup_{j<i}\Supp(\Ext^j_R(M/IM,N)).$$

Conversely, let $\fr p\in\bigcup_{j<i}\Supp(\Ext^j_R(M/IM,N))$. Set $\nu=\depth((I_M)_{\fr p},N_{\fr p})$, then $\nu<i$. For each $n>0$,  $I_M\subseteq\sqrt{\ann(IM/I^nM)}$, so that $\Ext^j_R(IM/I^nM,N)_{\fr p}=0$ for all $j<\nu$. Thus the following sequence $$0\mt\Ext^{\nu}_R(M/IM,N)_{\fr p}\mt\Ext^{\nu}_R(M/I^nM,N)_{\fr p}$$ is exact for all $n>0$. This induces an exact sequence $$0\mt\Ext^{\nu}_R(M/IM,N)_{\fr p}\mt H^{\nu}_I(M,N)_{\fr p}.$$
So, since $\Ext^{\nu}_R(M/IM,N)_{\fr p}\not=0$ and $\nu<i$, we obtain that $\fr p\in\bigcup_{j<i}\Supp(H^j_I(M,N))$ as required.
\end{proof}

Until now one does not know about the last integer $i$ such that $H^i_I(M,N)\not=0$, even for the case $I=\fr m$. For example, assume that $R$ is not regular local ring and $\pd_R(M)=+\infty$, then $H^i_{\fr m}(M,R/\fr m)=\Ext^i_R(M,R/\fr m)\not=0$ for all $i\geqslant 0$.  However, the following result shows that there is a union of only finitely many supports of generalized local cohomology modules so that the other supports can be viewed as its subsets.

\begin{corollary}\label{C25} Let $n=\dim(N)$. Then we have $$\bigcup_{j\geqslant 0}\Supp(H^j_I(M,N))=\bigcup_{j\leqslant n}\Supp (H^j_{I_M}(N))=\bigcup_{j\leqslant n}\Supp (H^j_I(M,N)).$$
\end{corollary}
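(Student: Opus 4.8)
The plan is to prove the two claimed equalities by combining Lemma \ref{L24} (applied with $i=+\infty$) with the observation that $\Ext^j_R(M/IM,N)$ and $H^j_{I_M}(N)$ have comparable supports, together with Grothendieck vanishing. First I would apply Lemma \ref{L24} with $i=+\infty$ to obtain
$$\bigcup_{j\geqslant 0}\Supp(H^j_I(M,N))=\bigcup_{j\geqslant 0}\Supp(\Ext^j_R(M/IM,N)).$$
Next I would analyze the right-hand side. Since $M/IM$ is a finitely generated module annihilated exactly by $I_M$, for a prime $\fr p$ one has $\fr p\in\Supp(\Ext^j_R(M/IM,N))$ for some $j\geqslant 0$ if and only if $\fr p\supseteq I_M+\ann(N)$ (the "only if" is immediate; for the "if", localize at $\fr p$, where $M/IM$ has a nonzero free summand over the quotient $R_{\fr p}/(I_M)_{\fr p}$ if $N_{\fr p}\ne 0$, forcing some $\Ext^j$ to be nonzero, e.g.\ via $\Ext^0=\Hom(M/IM,N)\ne 0$ when $\depth((I_M)_{\fr p},N_{\fr p})=0$, or the appropriate $\Ext^{\depth}$ otherwise as in the proof of Lemma \ref{L24}). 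Hence $\bigcup_{j\geqslant 0}\Supp(\Ext^j_R(M/IM,N))=V(I_M+\ann(N))=\Supp(N/I_MN)$.

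Now I would do the same computation for ordinary local cohomology: by a standard argument $\bigcup_{j\geqslant 0}\Supp(H^j_{I_M}(N))=\Supp(N/I_MN)$ as well. Indeed $H^0_{I_M}(N)=\Gamma_{I_M}(N)$ and the higher $H^j_{I_M}(N)$ are $I_M$-torsion, so every such support is contained in $V(I_M)\cap\Supp(N)=\Supp(N/I_MN)$; conversely, for $\fr p\in\Supp(N/I_MN)$ one has $H^{\depth((I_M)_{\fr p},N_{\fr p})}_{I_M}(N)_{\fr p}\cong H^{\depth(\cdots)}_{(I_M)_{\fr p}}(N_{\fr p})\ne 0$. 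By Grothendieck's Vanishing Theorem $H^j_{I_M}(N)=0$ for $j>\dim(N)=n$, so in fact $\bigcup_{j\leqslant n}\Supp(H^j_{I_M}(N))=\Supp(N/I_MN)$, giving the first equality of the corollary. For the second equality, $\bigcup_{j\leqslant n}\Supp(H^j_I(M,N))\subseteq\bigcup_{j\geqslant 0}\Supp(H^j_I(M,N))$ is trivial, while the reverse inclusion follows from applying Lemma \ref{L24} with $i=n+1$ (finite!), which gives $\bigcup_{j\leqslant n}\Supp(H^j_I(M,N))=\bigcup_{j\leqslant n}\Supp(\Ext^j_R(M/IM,N))$; combining with the already-established chain of equalities closes the loop.

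I expect the only delicate point to be the verification that $\Supp(\Ext^j_R(M/IM,N))$, unioned over all $j$, exhausts all of $V(I_M+\ann(N))$ rather than a proper subset — that is, ruling out the possibility that at some prime every $\Ext^j$ vanishes. This is exactly the nonvanishing input already used inside the proof of Lemma \ref{L24} (the exact sequence $0\to\Ext^{\nu}_R(M/IM,N)_{\fr p}\to H^{\nu}_I(M,N)_{\fr p}$ together with $\Ext^{\nu}_R(M/IM,N)_{\fr p}\ne 0$ for $\nu=\depth((I_M)_{\fr p},N_{\fr p})$), so no new work is really needed; the corollary is essentially a repackaging of Lemma \ref{L24} for $i=+\infty$ and $i=n+1$ plus Grothendieck vanishing. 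Everything else is routine bookkeeping with supports of finitely generated modules.
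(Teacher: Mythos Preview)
Your proposal is correct and follows essentially the same approach as the paper: both arguments rest on Lemma~\ref{L24} (applied for general $i$ and for $i=n+1$), the depth characterization $\Ext^{\nu}_R(M/IM,N)_{\fr p}\ne 0$ for $\nu=\depth((I_M)_{\fr p},N_{\fr p})$, and Grothendieck vanishing. The only cosmetic difference is that the paper applies Lemma~\ref{L24} a second time with $(R,I_M)$ in place of $(M,I)$ and links the two via $\bigcup_{j<i}\Supp(\Ext^j_R(R/I_M,N))=\bigcup_{j<i}\Supp(\Ext^j_R(M/IM,N))$, whereas you compute the common value explicitly as $\Supp(N/I_MN)$; these are the same depth argument packaged slightly differently.
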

\begin{proof} For any $i\in\Bbb N\cup\{+\infty\}$, by Lemma \ref{L24}, we obtain that $$\bigcup_{j<i}\Supp (H^j_{I_M}(N)) =\bigcup_{j<i}\Supp(\Ext^j_R(R/I_M,N)).$$ Moreover, by using basic properties of regular sequences, we get $$\bigcup_{j<i}\Supp (\Ext^j_R(R/I_M,N))=\bigcup_{j<i}\Supp(\Ext^j_R(M/IM,N)).$$ Thus, by Lemma \ref{L24}, we have $$\bigcup_{j<i}\Supp (H^j_{I_M}(N))=\bigcup_{j<i}\Supp(H^j_I(M,N))$$
and the corollary follows by Grothendieck's  Vanishing Theorem. 
\end{proof}
 
\section{A vanishing theorem}

J.  Herzog and N. Zamani  showed in \cite[Theorem 3.2]{HZ}  that if $\pd_R(M)<+\infty$, then $H^t_{\fr m}(M,N)=0$ for all $t>\dim(R)$. In this  section we extence Herzog-Zamani's result for an arbitrary ideal $I$ as follows.
\begin{theorem}\label{L204} Let $d=\dim(R)$. Assume that $\pd_R(M)<+\infty$, then  $H^t_I(M,N)=0$ for all $t>d$.
\end{theorem}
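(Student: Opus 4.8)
The plan is to reduce the statement to the ordinary local cohomology of $N$ together with the finite projective dimension of $M$, using the spectral sequence that computes generalized local cohomology. Specifically, I would start from the Grothendieck-type spectral sequence
\[
E_2^{p,q}=\Ext^p_R(M,H^q_I(N))\Longrightarrow H^{p+q}_I(M,N),
\]
which comes from composing $\Hom(M,-)$ with $\Gamma_I(-)$ applied to an injective resolution of $N$ (cf. Lemma \ref{L21}(i)). The target $H^t_I(M,N)$ is a subquotient of $\bigoplus_{p+q=t}E_\infty^{p,q}$, so it suffices to show $E_2^{p,q}=0$ whenever $p+q>d$. Since $\pd_R(M)<+\infty$, we automatically have $E_2^{p,q}=0$ for $p>\pd_R(M)$; and by Grothendieck's Vanishing Theorem $H^q_I(N)=0$ for $q>\dim(N)$. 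These two crude bounds alone are not enough (they only give vanishing beyond $\pd_R(M)+\dim(N)$, which is Lemma \ref{ya}), so the real work is a finer estimate on $\Ext^p_R(M,H^q_I(N))$ that exploits how large $q$ forces $H^q_I(N)$ to be small.

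The key input I would use is Lemma \ref{L202}: if $\pd_R(M)<+\infty$ then $\dim\Ext^j_R(M,R)\leqslant d-j$ for $0\leqslant j\leqslant \pd_R(M)$. The idea is that $H^q_I(N)$, while not finitely generated, has ``dimension'' at most $\dim N - q$ in the sense that $\dim\Supp H^q_I(N)\leqslant \dim N-q\leqslant d-q$ (this follows from Grothendieck vanishing applied locally, or from the standard fact that $H^q_I(N)_{\fr p}=H^q_{I R_{\fr p}}(N_{\fr p})$ vanishes once $\dim N_{\fr p}<q$). Combining this with a bound of the form $\dim\Ext^p_R(M,L)\leqslant \dim L - (\text{something depending on }p)$ for modules $L$ supported in small dimension — the analogue of Lemma \ref{L202} with $R$ replaced by an Artinian-like quotient, or more precisely an argument via a filtration of $L$ by modules of the form $R/\fr p$ — I would conclude $\Ext^p_R(M,H^q_I(N))$ is supported in dimension at most $d-p-q$, hence is zero once $p+q>d$. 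An alternative, cleaner route: localize at a prime $\fr p$ with $\dim R/\fr p = 0$, i.e. $\fr p=\fr m$ is not the point — rather, show $\Supp H^t_I(M,N)=\emptyset$ by checking that for every $\fr p$, $H^t_I(M,N)_{\fr p}=H^t_{IR_{\fr p}}(M_{\fr p},N_{\fr p})=0$ for $t>\dim R_{\fr p}$; but $t>d\geqslant \dim R_{\fr p}$ need not hold, so this localization trick reduces to the case $\fr p=\fr m$ only after a dimension-shifting induction on $d$.

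Therefore the cleanest structure is an induction on $d=\dim R$. For the base and inductive step I would use the short exact sequence $0\to \Gamma_I(N)\to N\to N/\Gamma_I(N)\to 0$ to reduce to the case where $N$ is $I$-torsion-free, pick an $N$-regular element, and dimension-shift; combined with the spectral sequence argument above this handles the general case. The Herzog–Zamani result \cite[Theorem 3.2]{HZ} is exactly the statement for $I=\fr m$, and I expect the proof there already contains the spectral sequence computation $E_2^{p,q}=\Ext^p_R(M,H^q_{\fr m}(N))$ together with Lemma \ref{L202}; the task here is to run the same machine with $H^q_I(N)$ in place of $H^q_{\fr m}(N)$, using that $\dim\Supp H^q_I(N)\leqslant d-q$.

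The main obstacle will be controlling $\Ext^p_R(M,H^q_I(N))$ when $H^q_I(N)$ is not finitely generated: Lemma \ref{L202} is stated only for the module $R$, so I will need to promote it to arbitrary (possibly non-finitely-generated) modules $L$ supported in dimension $\leqslant d-q$, getting $\dim\Ext^p_R(M,L)\leqslant d-q-p$. The way around this is to note $\Ext^p_R(M,-)$ commutes with direct limits when $M$ has a finite free resolution, write $L$ as a direct limit of its finitely generated submodules, and reduce to finitely generated $L$; for such $L$ one uses a prime filtration $0=L_0\subset L_1\subset\cdots\subset L_k=L$ with $L_i/L_{i-1}\cong R/\fr p_i$, $\fr p_i\in\Supp L$, and the long exact sequence in $\Ext$ to reduce to $L=R/\fr p$ with $\dim R/\fr p\leqslant d-q$, where Lemma \ref{L202} (applied over $R/\fr p$, noting $\pd_{R/\fr p}$ of $M\otimes R/\fr p$ may be infinite — so one instead uses that $\Ext^p_R(M,R/\fr p)$ has dimension $\leqslant \dim R/\fr p - p \leqslant d-q-p$ by a direct grade/depth argument) finishes the bound. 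Assembling these pieces into the spectral sequence gives $H^t_I(M,N)=0$ for $t>d$.
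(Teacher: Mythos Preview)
Your spectral sequence $E_2^{p,q}=\Ext^p_R(M,H^q_I(N))\Rightarrow H^{p+q}_I(M,N)$ is fine, and so is the observation that $\dim\Supp H^q_I(N)\leqslant d-q$. The gap is the step you flag as ``the main obstacle'' and then wave through: the inequality $\dim\Ext^p_R(M,R/\fr p)\leqslant \dim(R/\fr p)-p$ is simply false, even when $\pd_R(M)<\infty$. Take $R$ regular local of dimension $d$, $M=R/\fr m$ (so $\pd_R(M)=d$), and $\fr p=\fr m$: then $\Ext^p_R(R/\fr m,R/\fr m)\neq 0$ for all $0\leqslant p\leqslant d$, while $\dim(R/\fr m)-p=-p<0$ for $p>0$. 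More generally there is no ``grade/depth argument'' that produces $\dim\Ext^p_R(M,L)\leqslant\dim L-p$; Lemma~\ref{L202} is special to the second argument being $R$ itself. Consequently your $E_2^{p,q}$ need not vanish on the line $p+q=t>d$, and the argument collapses.

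The paper sidesteps exactly this difficulty by swapping the two functors: it uses the \emph{other} spectral sequence
\[
E_2^{i,j}=H^i_I\bigl(\Ext^j_R(M,R)\bigr)\Longrightarrow H^{i+j}_I(M,R),
\]
first proving the theorem only for $N=R$. Here $\Ext^j_R(M,R)$ is finitely generated with $\dim\Ext^j_R(M,R)\leqslant d-j$ by Lemma~\ref{L202}, so ordinary Grothendieck vanishing kills $E_2^{i,j}$ whenever $i+j>d$; hence $H^t_I(M,R)=0$ for $t>d$. The passage to arbitrary $N$ is then a descending induction using a presentation $0\to L\to R^n\to N\to 0$ and the known vanishing beyond $\pd_R(M)+d+1$ (Lemma~\ref{ya}) as the starting point. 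The moral: put the local cohomology on the outside (applied to a finitely generated module of controlled dimension) rather than the $\Ext$ on the outside (applied to a non-finitely-generated module of small support), so that Lemma~\ref{L202} can do its work.
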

\begin{proof} We first claim that $H^t_I(M,R)=0$ for all $t>d$. 
 Let $x_1,\ldots,x_m$ be a set of generators of $I$ and  $K^{n}_{\bullet}$ the Koszul complex of $R$ with respect to $x_1^n,\ldots,x_m^n$. We denote  by $C^n_{\bullet}$  the total complex associated to the double complex $K^n_{\bullet}\otimes_RF_{\bullet}$, where  $F_{\bullet}$ is a projective resolution of $M$. Consider the convergent spectral sequence
$$H^i(\Hom(K^{n}_{\bullet},\Ext^j_R(M,N)))\underset{i}{\Longrightarrow} H^{i+j}(\Hom(C^n_{\bullet}, N)).$$
Since $H^i_I(M,N)\cong\varinjlim_{n}H^i(\Hom(C^n_{\bullet},N))$ for all $i\geqslant 0$ by \cite[Theorem 4.2]{Za}, we obtain by passage to direct limits the following convergent spectral sequence
$$E^{i,j}_2:=H^i_{I}(\Ext^j_R(M,R))\underset{i}{\Longrightarrow}H^{i+j}= H^{i+j}_{I}(M,R).$$
Thus, for each $t\geqslant 0$, there is a finite filtration of the module $H^t=H^t_I(M,R)$
\begin{alignat}{2}
0=\phi^{t+1}H^t\subseteq\phi^{t}H^t\subseteq&\ldots\subseteq\phi^1H^t\subseteq\phi^0H^t=H^t\notag
\end{alignat}
such that $E^{i,t-i}_{\infty}\cong\phi^iH^t/\phi^{i+1}H^t$ for all $0\leqslant i\leqslant t$. It is clear that $E^{i,j}_2=0$ for all $j>\pd_R(M)$. If $i+j>d$ then $i>d-j$. Hence, by Lemma \ref{L202}, $i>\dim(\Ext^j_R(M,R))$ for all $0\leqslant j\leqslant\pd_R(M)$. So $E^{i,j}_2=0$ for all $0\leqslant j\leqslant\pd_R(M)$ and $i+j>d$. Thus, for each $t>d$, we get  $E^{i,t-i}_2=0$ for all $0\leqslant i\leqslant t$. Moreover, since $E^{i,t-i}_{\infty}$ is subquotient of $E^{i,t-i}_2$ for all $0\leqslant i\leqslant t$, it implies that  $E^{i,t-i}_{\infty}=0$ for all $0\leqslant i\leqslant t$ and all $t>d$. Therefore by  the  exact sequences
$$0\mt\phi^{i+1}H^t\mt\phi^{i}H^t\mt E^{i,t-i}_{\infty}\mt 0$$
for all $0\leqslant i\leqslant t$ we have $H^t_I(M,R)=\phi^0H^t=H^t=0$, and the claim is proved.
\medskip

Next,  since $d+1>\dim_R(M\otimes_RN),$ we get by Lemma \ref{ya} that $H^t_I(M,N)=0$ for all $t>\pd_R(M)+d+1$. Thus, it is enough to prove by descending induction on $t$ that $H^t_I(M,N)=0$, for all $d<t\leqslant\pd_R(M)+d+1$.  It is clear that  the assertion is true for $t=\pd_R(M) +d+1$. Assume that $d<t<\pd_R(M)+d+1$ and the assertion is true for $t+1$. For each finitely generated $R$-module $N$, there exists a non negative integer $n$ such that the following sequence 
$$0\mt L\mt R^n\mt N\mt 0$$
is exact for some finitely generated $R$-module $L$. It induces an exact sequence of generalized local cohomology modules
$$H^t_I(M,R^n)\mt H^t_I(M,N)\mt H^{t+1}_I(M,L).$$
By inductive hypothesis, we get $H^{t+1}_I(M,L)=0$. Since $t>d$, we get by the claim  that $H^t_I(M,R^n)=H^t_I(M,R)^n=0$.  Therefore $H^t_I(M,N)=0$ as required.
\end{proof}

It should be mentioned that the functor $H^t_I(M,-)$ commutes with the direct limits in the category of all $R$-modules. Therefore, as an immediate consequence of Theorem \ref{L204} we get the following  result. 

\begin{corollary} Let  $d=\dim(R)$. Assume that $\pd_R(M)<+\infty$, then we have $H^t_I(M,K)=0$ for all $t>d$ and all (not necessary to be finitely generated) $R$-module $K$.
\end{corollary}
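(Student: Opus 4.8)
The plan is to deduce the statement directly from Theorem \ref{L204} by exploiting the commutation of the functor $H^t_I(M,-)$ with direct limits, which is the crucial structural fact singled out in the paragraph preceding the corollary. So the first step is to record precisely why $H^t_I(M,-)$ commutes with direct limits: from the definition $H^t_I(M,N)=\varinjlim_n \Ext^t_R(M/I^nM,N)$, it suffices to know that each $\Ext^t_R(M/I^nM,-)$ commutes with direct limits, and this holds because $M/I^nM$ is finitely presented over the Noetherian ring $R$ (so one can compute $\Ext$ from a resolution by finitely generated free modules, each $\Hom(R^{b},-)$ commutes with $\varinjlim$, cohomology commutes with $\varinjlim$, and finally a direct limit of direct limits is a direct limit).

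Next I would write an arbitrary $R$-module $K$ as a direct limit of its finitely generated submodules, $K=\varinjlim_{\lambda} K_\lambda$, a filtered colimit indexed by the finitely generated submodules of $K$ ordered by inclusion. Each $K_\lambda$ is a finitely generated $R$-module, so Theorem \ref{L204} applies to give $H^t_I(M,K_\lambda)=0$ for all $t>d$. Then for $t>d$,
\[
H^t_I(M,K)\cong H^t_I\bigl(M,\varinjlim_{\lambda}K_\lambda\bigr)\cong\varinjlim_{\lambda}H^t_I(M,K_\lambda)=\varinjlim_{\lambda}0=0,
\]
which is exactly the claim.

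There is essentially no obstacle here; the only point requiring a word of care is the justification that $H^t_I(M,-)$ commutes with filtered colimits, and in particular that this does not secretly require $K$ to be finitely generated — it does not, precisely because the finiteness hypothesis is on $M$ (hence on $M/I^nM$), not on the second argument. I would therefore keep the proof to two or three sentences: invoke the displayed commutation, write $K$ as the filtered colimit of its finitely generated submodules, apply Theorem \ref{L204} termwise, and pass to the colimit.
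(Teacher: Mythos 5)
Your argument is exactly the paper's: the remark preceding the corollary records that $H^t_I(M,-)$ commutes with direct limits, and the corollary is obtained by writing $K$ as the filtered colimit of its finitely generated submodules and applying Theorem \ref{L204} termwise. You simply spell out the justification for the commutation (via finite presentation of $M/I^nM$) that the paper leaves implicit, which is fine.
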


\begin{remark}
It is well-known by the Vanishing and non Vanishing Theorem of Grothendieck for  the theory of ordinary local cohomology that $H_{I}^t(N) = 0$ for all $t> n= \dim N$ and $H_{\fr m}^n(N)\not= 0$. Therefore, 
 in view of Theorem \ref{L204}, it is natural to ask whether  $H^i_I(M,N)=0$ and $H_{\fr m}^n(M,N)\not= 0$ for all $i>n= \dim N$ and all finitely generated $R$-modules of finite projective dimension. Unfortunately, the answer is negative as the  following example illustrates.
\medskip

\noindent Let $k$ be a field and $R=k[[x,y,u,v]]$. Let $\fr m=(x,y,u,v)R$, $M=R/(y)$ and $N=R/(x)\cap (y)$. It is clear that $\dim(R)=4$, $\pd_R(M)<+\infty$ and $\dim(N)=3$.  Following \cite{HZ}, from the exact sequence $0\mt R\xrightarrow{y}R\mt M\mt 0$, we get an exact sequence $$H^3_{\fr m}(N)\xrightarrow{y}H^3_{\fr m}(N)\mt H^4_{\fr m}(M,N)\mt 0.$$
Thus, we get an isomorphism $H^4_{\fr m}(M,N)\cong H^3_{\fr m}(N)/yH^3_{\fr m}(N)$. By \cite[Theorem 7.3.2]{Bs}, we have $\Att(H^3_{\fr m}(N))=\{(x)R, (y)R\}$. Note that $H^3_{\fr m}(N)$ is Artinian. Hence, since $y\in (y)R$, we get by \cite[Proposition 7.2.11]{Bs} that $yH^3_{\fr m}(N)\not=H^3_{\fr m}(N)$. It follows that $H^4_{\fr m}(M,N)\not=0$. 
\medskip
\end{remark}

\section{The least integer $r$ such that $\Supp(H^r_I(M,N))$ is infinite}

First, we recall the notion of generalized regular sequences introduced in \cite{Nh1}: A sequence $x_1,\ldots,x_r\in I$ is called a {\it generalized regular sequence} of $N$ in $I$ if for any $j=1,\ldots,r$, $x_j\notin\fr p$ for all $\fr p\in\Ass(N/(x_1,\ldots,x_{j-1})N)$ satisfying $\dim (R/\fr p)\geqslant 2$. If $\dim(N/IN)\geqslant 2$ then any generalized regular sequence of $N$ in $I$ is of length at most $\dim(N)-\dim(N/IN),$ all maximal generalized regular sequences of $N$ in $I$ have the same length, and this common length is called generalized depth of $N$ in $I$ and denoted by $\gdepth(I,N)$. Note that if $\dim (N/IN)\leqslant 1$ then there exists a generalized regular sequence of length $r$  of $N$ in $I$ for any given integer $r>0.$ So, in this case we stipulate $\gdepth(I,N)=+\infty.$ Below we show that the generalized depth can be computed by  generalized local cohomology modules.

\begin{theorem}\label{T24} Set $r=\gdepth(I_M,N)$ and
$J_r=\bigcap_{j<r}\ann(\Ext^j_R(M/IM,N)).$
Then $\dim(R/J_r)\leqslant 1$ and 
\begin{alignat}{2} r&=\inf\{i\mid\Supp(H^i_I(M,N))\text{ is not finite}\}\notag\\
&=\inf\{i\mid H^i_I(M,N)\ncong H^i_{J_r}(M,N)\},\notag
\end{alignat}
where we use the convenience that $\inf(\emptyset)=+\infty.$
\end{theorem}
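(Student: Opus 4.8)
The plan is to exploit Lemma~\ref{L24}, which already identifies $\bigcup_{j<i}\Supp(H^j_I(M,N))$ with $\bigcup_{j<i}\Supp(\Ext^j_R(M/IM,N))$ for every $i\in\Bbb N\cup\{+\infty\}$, together with the characterization of $\gdepth$ via generalized regular sequences. The first and most routine step is to show $\dim(R/J_r)\leqslant 1$. Here $J_r=\bigcap_{j<r}\ann(\Ext^j_R(M/IM,N))$, so $\Supp(R/J_r)=\bigcup_{j<r}\Supp(\Ext^j_R(M/IM,N))=\bigcup_{j<r}\Supp(H^j_I(M,N))$ by Lemma~\ref{L24}. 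I would argue that $\fr p\in\Supp(\Ext^j_R(M/IM,N))$ for some $j<r$ forces $\dim(R/\fr p)\leqslant 1$: if $\dim(R/\fr p)\geqslant 2$ and $\fr p\supseteq I_M$, then since $r=\gdepth(I_M,N)$ a maximal generalized regular sequence $x_1,\dots,x_r$ of $N$ in $I_M$ can be chosen avoiding $\fr p$ at each stage (each $x_t\notin\fr q$ for all $\fr q\in\Ass(N/(x_1,\dots,x_{t-1})N)$ with $\dim(R/\fr q)\geqslant 2$, and in particular one keeps $\fr p$ in the support of the quotients), so $x_1,\dots,x_r$ localizes to an $N_{\fr p}$-regular sequence in $(I_M)_{\fr p}$, giving $\depth((I_M)_{\fr p},N_{\fr p})\geqslant r$ and hence $\Ext^j_{R_{\fr p}}((M/IM)_{\fr p},N_{\fr p})=0$ for all $j<r$; this contradicts $\fr p\in\Supp(\Ext^j_R(M/IM,N))$. (The case $\fr p\not\supseteq I_M$ is vacuous since then $(M/IM)_{\fr p}=0$.) Thus every prime in $\Supp(R/J_r)$ has coheight at most $1$, i.e. $\dim(R/J_r)\leqslant 1$.

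Next, for the first equality $r=\inf\{i\mid\Supp(H^i_I(M,N))\text{ is not finite}\}$: by the previous paragraph $\Supp(R/J_r)=\bigcup_{j<r}\Supp(H^j_I(M,N))$ has dimension $\leqslant 1$, and a subset of $\Spec(R)$ of dimension $\leqslant 1$ contained in a closed set is a finite union of points together with the generic points of the $1$-dimensional components — more precisely $\Supp(R/J_r)$ is closed, has dimension $\leqslant 1$, hence has only finitely many primes that are not $\fr m$; so each $\Supp(H^j_I(M,N))$ for $j<r$ is finite, giving the inequality $\inf\{\dots\}\geqslant r$. For the reverse, I must show $\Supp(H^r_I(M,N))$ is infinite. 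By Lemma~\ref{L22} (with $l=\depth(I_M,N)$) one has $\Ass H^l_I(M,N)=\Ass\Ext^l_R(M/IM,N)$; but the relevant tool is rather Lemma~\ref{L24} applied with $i=r+1$: $\bigcup_{j\leqslant r}\Supp(H^j_I(M,N))=\bigcup_{j\leqslant r}\Supp(\Ext^j_R(M/IM,N))\supseteq\Supp(\Ext^r_R(M/IM,N))$. It remains to see that $\Supp(\Ext^r_R(M/IM,N))$ is infinite, equivalently contains a prime $\fr p$ with $\dim(R/\fr p)\geqslant 2$. This is exactly where maximality of the generalized regular sequence of length $r$ enters: since no generalized regular sequence of $N$ in $I_M$ has length $>r$, there exists $\fr p\in\Ass(N/(x_1,\dots,x_r)N)$ with $\dim(R/\fr p)\geqslant 2$ and $\fr p\supseteq I_M$; then $\depth((I_M)_{\fr p},N_{\fr p})=r$ (it is $\geqslant r$ since the sequence is $N_{\fr p}$-regular there, and $\leqslant r$ since $\fr p$ is associated to the quotient), so $\Ext^r_{R_{\fr p}}((M/IM)_{\fr p},N_{\fr p})\neq 0$, i.e. $\fr p\in\Supp(\Ext^r_R(M/IM,N))$ and $\Supp(H^r_I(M,N))\ni\fr p$ with $\dim(R/\fr p)\geqslant 2$; since this support is a closed set containing a prime of coheight $\geqslant 2$, it is infinite.

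Finally, for $r=\inf\{i\mid H^i_I(M,N)\ncong H^i_{J_r}(M,N)\}$: Lemma~\ref{L23} applied with the integer $r$ (so $J_r=\bigcap_{j<r}\ann\Ext^j_R(M/IM,N)$) gives $H^j_I(M,N)\cong H^j_{J_r}(M,N)$ for all $j<r$, so the infimum is $\geqslant r$. For equality I would show $H^r_I(M,N)\ncong H^r_{J_r}(M,N)$: since $\dim(R/J_r)\leqslant 1$, Grothendieck vanishing gives $H^j_{J_r}(-)=0$ for $j\geqslant 2$, and more to the point $\Supp(H^r_{J_r}(M,N))\subseteq\Supp(R/J_r)$ is finite, whereas $\Supp(H^r_I(M,N))$ was just shown to be infinite — so the two modules cannot be isomorphic. (If $r\leqslant 1$ one argues directly from the finiteness of $\Supp(R/J_r)$ versus the infinite support of $H^r_I(M,N)$; if $r=+\infty$ all three quantities are $+\infty$ by the convention $\inf(\emptyset)=+\infty$ and there is nothing to prove.)

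The main obstacle is the reverse inequality in the first equality, i.e. producing the prime $\fr p$ with $\dim(R/\fr p)\geqslant 2$ in $\Supp(H^r_I(M,N))$: it requires translating the maximality of a generalized regular sequence of $N$ in $I_M$ into a nonvanishing statement about $\Ext^r$ after localization, and being careful that the chosen sequence remains regular on $N_{\fr p}$. Everything else is bookkeeping built on Lemmas~\ref{L23} and~\ref{L24} and Grothendieck vanishing.
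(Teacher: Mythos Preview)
Your argument is correct and follows essentially the same route as the paper: use Lemma~\ref{L24} to identify $\bigcup_{j<r}\Supp(H^j_I(M,N))$ with $\Supp(R/J_r)$, show this has dimension $\leqslant 1$ via a generalized regular sequence, produce a prime $\fr p$ of coheight $\geqslant 2$ in $\Supp(H^r_I(M,N))$ from the maximality of that sequence, and invoke Lemma~\ref{L23} for the second equality. The only difference is cosmetic: the paper quotes \cite[Proposition~4.4]{Nh1} to obtain $\fr p$ with $\depth((I_M)_{\fr p},N_{\fr p})=r$, whereas you extract such a $\fr p$ directly from $\Ass(N/(x_1,\dots,x_r)N)$.

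Two small points of exposition to tighten. First, the passage from $\fr p\in\bigcup_{j\leqslant r}\Supp(H^j_I(M,N))$ to $\fr p\in\Supp(H^r_I(M,N))$ needs the remark that $\fr p\notin\bigcup_{j<r}\Supp(H^j_I(M,N))$ because $\dim(R/\fr p)\geqslant 2$ while that union has dimension $\leqslant 1$; you have all the ingredients but should say it. Second, $\Supp(H^r_I(M,N))$ is not known to be \emph{closed}, only specialization-closed; that is already enough, since $V(\fr p)\subseteq\Supp(H^r_I(M,N))$ and $V(\fr p)$ is infinite when $\dim(R/\fr p)\geqslant 2$ (this is the content of \cite[Theorem~144]{Kap}, which the paper also invokes). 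Finally, the case $r=+\infty$ deserves one explicit line: then $\dim(N/I_MN)\leqslant 1$, so every $\Supp(\Ext^j_R(M/IM,N))\subseteq\Supp(N/I_MN)$ is finite, giving $\dim(R/J_r)\leqslant 1$, and Lemma~\ref{L23} with $i=+\infty$ handles the second infimum.
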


\begin{proof} If $\dim(N/I_MN)\leqslant 1$ then $r=+\infty$ and $\dim(R/J_r)\leqslant 1$. Since 
 $\Supp(N/I_MN)$ is finite, $\Supp(H^j_I(M,N))$ is finite for all $j\geqslant 0$. On the other hand, by Lemma \ref{L23}, $H^j_I(M,N)\cong H^j_{J_r}(M,N)$ for all $j\geqslant 0$. Therefore, the result is true in this case. 

If $\dim(N/I_MN)\geqslant 2$ then $r<+\infty$. Let $x_1,\ldots,x_r$ be a maximal generalized regular sequence of $N$ in $I_M$. For any $\fr p\in\Supp (N/I_MN)$ such that $\dim (R/\fr p)\geqslant 2$, we obtain that $x_1/1,\ldots,x_r/1$ is an $N_{\fr p}$-regular sequence in $(I_M)_{\fr p}$. It follows that $\Ext^j_R(M/IM,N)_{\fr p}=0$ for all $j<r$, hence $\dim(\Ext^j_R(M/IM,N))\leqslant 1$ for all $j<r$. Thus $\dim(R/J_r)\leqslant 1$, so that $\bigcup_{j<r}\Supp(\Ext^j_R(M/IM,N))=\Supp(R/J_r)$ is a finite set. From this, we obtain by Lemma \ref{L24} that $\Supp(H^j_I(M,N))$ is a finite set for all $j<r$. Note that, by  \cite[Proposition 4.4]{Nh1}, we have $$r=\min\{\depth((I_M)_{\fr p},N_{\fr p})\mid \fr p\in\Supp(N/I_MN),  \dim(R/\fr p)\geqslant 2\}.$$
So $r=\depth((I_M)_{\fr p},N_{\fr p})$ for some $\fr p\in\Supp (N/I_MN)$ with $\dim (R/\fr p)\geqslant 2$. Hence $\Ext^r_R(M/IM,N)_{\fr p}\not=0$. Thus, by Lemma \ref{L24}, we have $\fr p\in\bigcup_{j=0}^r\Supp(H^j_I(M,N))$. But, since $\Supp(H^j_I(M,N))$ is finite for all $j<r$, $\fr p\notin\bigcup_{j<r}\Supp(H^j_I(M,N))$. Thus $\fr p\in\Supp(H^r_I(M,N))$, so that  $\Supp(H^r_I(M,N))$ is an infinite set by \cite[Theorem 144]{Kap}. Therefore $$r=\inf\{i\mid\Supp(H^i_I(M,N))\text{  is not finite }\}.$$

Finally, keep in mind that  $\dim(R/J_r)\leqslant 1$, hence $\Supp(H^r_{J_r}(M,N))$ is finite; while $\Supp(H^r_I(M,N))$ is infinite by the above equality. Thus $H^r_I(M,N)\ncong H^r_{J_r}(M,N)$. Moreover, by Lemma \ref{L23}, we get $H^j_I(M,N)\cong H^j_{J_r}(M,N)$ for all $j<r$. Therefore $$r=\inf\{i\mid H^i_I(M,N)\ncong H^i_{J_r}(M,N)\}$$ as required.
\end{proof}

\begin{corollary}\label{C215} Let $i$ be a non negative integer. If $\Supp(H^j_I(N))$ is finite for all $j\leqslant i$ then so is $\Supp(H^j_I(M,N))$ for all finitely generated $R$-module $M$. 
\end{corollary}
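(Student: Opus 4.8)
The plan is to deduce this from Theorem~\ref{T24}. First I would note that Theorem~\ref{T24}, applied with the module $R$ in place of $M$ (so that $I_R=\ann_R(R/I)=I$), yields
\[
\gdepth(I,N)=\inf\{\,j\mid\Supp(H^j_I(N))\text{ is not finite}\,\},
\]
while Theorem~\ref{T24} applied to $M$ itself yields
\[
\gdepth(I_M,N)=\inf\{\,j\mid\Supp(H^j_I(M,N))\text{ is not finite}\,\}.
\]
Consequently the hypothesis ``$\Supp(H^j_I(N))$ is finite for all $j\leqslant i$'' is equivalent to $\gdepth(I,N)\geqslant i+1$, and the desired conclusion ``$\Supp(H^j_I(M,N))$ is finite for all $j\leqslant i$'' is equivalent to $\gdepth(I_M,N)\geqslant i+1$. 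Thus the whole corollary reduces to the monotonicity inequality $\gdepth(I,N)\leqslant\gdepth(I_M,N)$.

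To prove this inequality I would exploit the inclusion $I\subseteq I_M$, which holds because $I\cdot(M/IM)=0$ forces $I\subseteq\ann_R(M/IM)=I_M$. The key observation is that the defining condition for a generalized regular sequence $x_1,\ldots,x_r$ of $N$ in an ideal $\fr a$ --- namely that $x_j\notin\fr p$ for every $\fr p\in\Ass(N/(x_1,\ldots,x_{j-1})N)$ with $\dim(R/\fr p)\geqslant 2$ --- involves $\fr a$ only through the requirement $x_1,\ldots,x_r\in\fr a$. Hence every generalized regular sequence of $N$ in $I$ is automatically a generalized regular sequence of $N$ in $I_M$. When $\dim(N/I_MN)\geqslant 2$ one also has $\dim(N/IN)\geqslant 2$, since $V(I_M)\subseteq V(I)$ gives $\Supp(N/I_MN)\subseteq\Supp(N/IN)$; so both generalized depths are finite, a maximal generalized regular sequence of $N$ in $I$ (of length $\gdepth(I,N)$) is a generalized regular sequence of $N$ in $I_M$ and therefore extends to a maximal one (of length $\gdepth(I_M,N)$), and we get $\gdepth(I,N)\leqslant\gdepth(I_M,N)$. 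In the remaining case $\dim(N/I_MN)\leqslant 1$ one has $\gdepth(I_M,N)=+\infty$ by convention, so the inequality is trivial.

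An equivalent route, if one prefers to avoid invoking Theorem~\ref{T24} for the module $R$, is to first combine Lemma~\ref{L24} (applied to $M$ and, with the ideal $I_M$, to $R$) with the regular-sequence identity used in the proof of Corollary~\ref{C25} to obtain $\bigcup_{j\leqslant i}\Supp(H^j_I(M,N))=\bigcup_{j\leqslant i}\Supp(H^j_{I_M}(N))$; one is then left to compare the ordinary local cohomology of $N$ with respect to $I$ and with respect to $I_M$, which is again settled by the generalized-depth monotonicity above. I do not anticipate a genuine obstacle here: the real content is carried entirely by Theorem~\ref{T24}, and the only points requiring care are the bookkeeping around the $+\infty$ convention together with the split into the cases $\dim(N/I_MN)\leqslant 1$ and $\dim(N/I_MN)\geqslant 2$, and the verification of the two applications of Theorem~\ref{T24} (in particular the identity $I_R=I$).
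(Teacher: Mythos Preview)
Your proposal is correct and follows exactly the paper's approach: the paper's proof is the single sentence ``As $\gdepth(I,N)\leqslant\gdepth(I_M,N)$, the result follows by Theorem~\ref{T24},'' and you have simply unpacked both halves of that sentence, spelling out why Theorem~\ref{T24} reduces the claim to the inequality and why the inclusion $I\subseteq I_M$ forces the inequality. Your case split on $\dim(N/I_MN)$ and the verification $I_R=I$ are the right bookkeeping details; nothing is missing.
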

\begin{proof} As $\gdepth(I,N)\leqslant\gdepth(I_M,N)$, the result follows by Theorem \ref{T24}.
\end{proof}

Note that for an arbitrary $R$-module $K$, the condition for $\Supp (K)$ to be a finite set  is in general not equivalent to the condition that $\dim (R/\fr p)\leqslant 1$ for all $\fr p\in\Supp (K).$  However, we have  immediate  consequences of Theorem \ref{T24} as follows. 

\begin{corollary} Let $i$ be a non negative integer. Then $\Supp(H^j_I(M,N))$ is finite for all $j\leqslant i$ if and only if $\dim(R/\fr p)\leqslant 1$ for all $\fr p\in\Supp(H^j_I(M,N))$ and all $j\leqslant i$. 
\end{corollary}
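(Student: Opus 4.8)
The plan is to deduce this corollary directly from Theorem \ref{T24}, exploiting the fact that the infimum appearing there records exactly the first place where the support becomes genuinely large in the sense of dimension, not merely infinite. The forward implication is trivial: if $\Supp(H^j_I(M,N))$ is finite for all $j\leqslant i$, then in particular each such support is a finite set of primes, and for any prime $\fr p$ in a finite subset of $\Spec(R)$ that is also in $\Supp$ of a module, one cannot have $\dim(R/\fr p)\geqslant 2$; more precisely, a finite set of primes closed under specialization (which $\Supp$ of a module always is, being a closed set once we pass to the finitely generated pieces, or at least specialization-closed) can only contain primes $\fr p$ with $\dim(R/\fr p)\leqslant 1$, since a prime with $\dim(R/\fr p)\geqslant 2$ has infinitely many primes below it in $V(\fr p)$. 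Actually the cleanest phrasing: if $\fr p\in\Supp(H^j_I(M,N))$ with $\dim(R/\fr p)\geqslant 2$, then $V(\fr p)\subseteq\Supp(H^j_I(M,N))$ is infinite, contradiction. So finiteness forces the dimension condition.

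For the reverse implication I would argue by contraposition using Theorem \ref{T24}. Suppose $\Supp(H^j_I(M,N))$ is infinite for some $j\leqslant i$, and let $r=\gdepth(I_M,N)$. By Theorem \ref{T24}, $r=\inf\{t\mid\Supp(H^t_I(M,N))\text{ is not finite}\}$, so $r\leqslant j\leqslant i$. Now the key point from the proof of Theorem \ref{T24} is that at level exactly $r$ one produces a prime $\fr p\in\Supp(H^r_I(M,N))$ with $\dim(R/\fr p)\geqslant 2$ — namely the prime realizing $r=\min\{\depth((I_M)_{\fr q},N_{\fr q})\mid \fr q\in\Supp(N/I_MN),\ \dim(R/\fr q)\geqslant 2\}$ via \cite[Proposition 4.4]{Nh1}. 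Since $r\leqslant i$, this gives a prime $\fr p$ with $\dim(R/\fr p)\geqslant 2$ lying in $\Supp(H^r_I(M,N))$ with $r\leqslant i$, which negates the right-hand condition. Hence the dimension condition for all $j\leqslant i$ implies finiteness of all those supports.

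One subtlety worth flagging is that Theorem \ref{T24} as stated only guarantees that $\Supp(H^r_I(M,N))$ is infinite at the single critical index $r$, and a priori for $r<j\leqslant i$ the support $\Supp(H^j_I(M,N))$ could be finite again while still containing a prime of dimension $\geqslant 2$ — but a finite specialization-closed set cannot contain such a prime, so this cannot happen, and conversely an infinite support at index $j$ in this range does not by itself hand us the dimension-$2$ prime. This is why the argument must route through the \emph{least} such index $r$ rather than working at $j$ directly: it is precisely at $r$ that the explicit prime of dimension $\geqslant 2$ is available, and the inequality $r\leqslant j\leqslant i$ transports the conclusion back to the range we care about. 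The main (minor) obstacle is simply making this bookkeeping with the infimum airtight; there is no new technical content beyond Theorem \ref{T24}.
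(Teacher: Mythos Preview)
Your proposal is correct and matches the paper's intent: the corollary is stated there as an immediate consequence of Theorem \ref{T24} without further proof, and you have simply filled in the details. Your forward implication via specialization-closedness of supports (together with the fact, implicit in the paper's use of \cite[Theorem 144]{Kap}, that $V(\fr p)$ is infinite once $\dim(R/\fr p)\geqslant 2$) is a clean elementary argument; one could equally read it off from the inclusion $\bigcup_{j<r}\Supp(H^j_I(M,N))\subseteq\Supp(R/J_r)$ with $\dim(R/J_r)\leqslant 1$ established in the proof of Theorem \ref{T24}, but your route is just as good. The reverse implication is exactly as the paper would have it: pass to the least index $r=\gdepth(I_M,N)\leqslant i$ and invoke the prime $\fr p$ with $\dim(R/\fr p)\geqslant 2$ produced in the proof of Theorem \ref{T24}. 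Your flagged subtlety and its resolution are accurate.
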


\begin{corollary}\label{c213} Set $r=\gdepth(I,N)$ and $J_r=\bigcap_{j<r}\ann(\Ext^j_R(R/I,N))$. Then $\dim(R/J_r)\leqslant 1$ and 
$$r=\inf\{i\mid\Supp(H^i_I(N))\text{ is not finite }\}=\inf\{i\mid H^i_I(N)\ncong H^i_{J_r}(N)\}.$$
\end{corollary}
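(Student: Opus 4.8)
The plan is to derive Corollary \ref{c213} as the special case $M = R$ of Theorem \ref{T24}. First I would observe that for $M = R$ we have $I_M = \ann_R(R/IR) = \sqrt{0:R/I}$, or more precisely $I_R = I$ up to radical; in fact $R/IR = R/I$ so $I_R = \ann_R(R/I) = I$. Hence $H^j_I(R,N) = H^j_I(N)$ by the remark following Herzog's definition in the Introduction (that $H^j_I(R,N)$ is the ordinary local cohomology module), and $\gdepth(I_R, N) = \gdepth(I,N) = r$. Likewise the defining ideal $J_r = \bigcap_{j<r}\ann(\Ext^j_R(M/IM,N))$ becomes $\bigcap_{j<r}\ann(\Ext^j_R(R/I,N))$, which is exactly the $J_r$ in the statement of Corollary \ref{c213}.

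Next I would simply instantiate the three conclusions of Theorem \ref{T24} with these substitutions. The conclusion $\dim(R/J_r) \leqslant 1$ is immediate. The first equality $r = \inf\{i \mid \Supp(H^i_I(M,N)) \text{ is not finite}\}$ becomes $r = \inf\{i \mid \Supp(H^i_I(N)) \text{ is not finite}\}$, and the second equality $r = \inf\{i \mid H^i_I(M,N) \ncong H^i_{J_r}(M,N)\}$ becomes $r = \inf\{i \mid H^i_I(N) \ncong H^i_{J_r}(N)\}$, since $H^j_I(R,N) = H^j_I(N)$ and $H^j_{J_r}(R,N) = H^j_{J_r}(N)$ for all $j$. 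This gives the displayed formula verbatim.

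There is essentially no obstacle here; the only point requiring a word of care is confirming that the identifications $H^j_I(R,N) \cong H^j_I(N)$ and $I_R = I$ are legitimate, which follows directly from the definitions recalled in the Introduction and in Section 2 (one can also cite Lemma \ref{L21}$(ii)$ in the degenerate direction, but it is not needed). I would therefore present the proof as a single short paragraph: "Apply Theorem \ref{T24} with $M = R$, noting that $I_R = I$ and $H^j_I(R,N) = H^j_I(N)$ for all $j \geqslant 0$; the three assertions then follow." If the authors prefer, one could alternatively note that Corollary \ref{c213} also follows from combining Corollary \ref{C25} (in the case $M=R$) with the argument of Theorem \ref{T24}, but the one-line specialization is cleanest.
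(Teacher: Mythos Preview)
Your proposal is correct and matches the paper's approach: Corollary~\ref{c213} is stated in the paper without a separate proof, as the specialization $M=R$ of Theorem~\ref{T24}, using exactly the identifications $I_R=\ann_R(R/I)=I$ and $H^j_I(R,N)=H^j_I(N)$ that you point out. Your one-line summary ``Apply Theorem~\ref{T24} with $M=R$'' is precisely what the paper intends.
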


It should be mentioned that the first equality of Corollary \ref{c213} was proved by L.T. Nhan in \cite[Proposition 5.2]{Nh1}. 

\begin{theorem}\label{T28} Let $i$ be a non negative integer and $P_i=\bigcup_{j<i}\Supp(H^j_I(M,N))$, then $$\Ass(H^i_I(M,N))\bigcup P_i=\Ass(\Ext^i_R(M/IM,N))\bigcup P_i.$$
In particular, $\Ass(H^r_I(M,N))$ is a finite set, where $r=\gdepth(I_M,N)$.
\end{theorem}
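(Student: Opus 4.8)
The plan is to prove the displayed equality
\[
\Ass(H^i_I(M,N))\cup P_i=\Ass(\Ext^i_R(M/IM,N))\cup P_i
\]
by a double inclusion, localizing at each relevant prime and reducing to the comparison between $\Ext^i_R(M/IM,N)$ and $H^i_I(M,N)$ that is already implicit in the proof of Lemma~\ref{L24}. Fix a prime $\fr p$. If $\fr p\in P_i$ it lies in both sides, so it suffices to treat $\fr p\notin P_i$; by Lemma~\ref{L24} this forces $\Ext^j_R(M/IM,N)_{\fr p}=0$ for all $j<i$, equivalently $\nu:=\depth((I_M)_{\fr p},N_{\fr p})\geqslant i$.

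First I would handle the inclusion $\subseteq$. Suppose $\fr p\in\Ass(H^i_I(M,N))$ and $\fr p\notin P_i$. Since $I_M\subseteq\sqrt{\ann(IM/I^nM)}$ we have $\Ext^j_R(IM/I^nM,N)_{\fr p}=0$ for $j<\nu$, and as $\nu\geqslant i$ in particular for $j<i$; the long exact sequence coming from $0\to IM/I^nM\to M/I^nM\to M/IM\to 0$ then gives, for each $n$, that $\Ext^i_R(M/IM,N)_{\fr p}\to\Ext^i_R(M/I^nM,N)_{\fr p}$ is injective, and passing to the direct limit yields an injection $\Ext^i_R(M/IM,N)_{\fr p}\hookrightarrow H^i_I(M,N)_{\fr p}$. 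Dually, the cokernel side: $\Ext^i_R(M/I^nM,N)_{\fr p}\to H^i_I(M,N)_{\fr p}$ together with $\Ext^i_R(IM/I^nM,N)_{\fr p}$ controlling the quotient --- more carefully, I would argue that over $R_{\fr p}$ the map $\Ext^i_R(M/IM,N)_{\fr p}\to H^i_I(M,N)_{\fr p}$ is in fact an isomorphism, because the higher $\Ext$ of the $I_M$-torsion pieces $IM/I^nM$ against $N$ vanish up to degree $\nu>i$ as well, so the direct limit is unchanged in degree $i$. Granting this isomorphism, $\Ass(H^i_I(M,N))$ and $\Ass(\Ext^i_R(M/IM,N))$ agree after removing $P_i$, giving both inclusions simultaneously; a membership $\fr p\in\Ass$ is detected by $\Hom(k(\fr p),(-)_{\fr p})\neq0$, which is preserved under an isomorphism of $R_{\fr p}$-modules.

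The reverse inclusion $\supseteq$ is symmetric given the same local isomorphism. So the crux is to establish that $\Ext^i_R(M/IM,N)_{\fr p}\cong H^i_I(M,N)_{\fr p}$ whenever $i\leqslant\depth((I_M)_{\fr p},N_{\fr p})$; this is exactly the kind of statement proved in the second half of the proof of Lemma~\ref{L24}, where the injectivity was extracted, and I would push that argument one step further using that $\Ext^j_R(IM/I^nM,N)_{\fr p}=0$ for all $j\leqslant\nu$ (not just $j<\nu$) --- wait, that requires $j<\nu$; the honest statement is $j<\nu$, which for $i<\nu$ suffices to kill the obstruction in degree $i$ on both sides. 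The one boundary case $i=\nu$ needs separate care: there $\fr p$ may lie in $\Ass(\Ext^i_R(M/IM,N))$, but one still has the injection $\Ext^\nu\hookrightarrow H^\nu$, and the quotient is a subquotient of $\varinjlim\Ext^\nu(IM/I^nM,N)$ whose associated primes do not interfere after adjoining $P_i$; I expect this degenerate-degree bookkeeping to be the main obstacle, and I would resolve it by noting that when $i=\nu=\gdepth(I_M,N)$ is exactly the generalized depth, $\fr p$ is already in the finite set $\Supp(R/J_r)$, so the set-theoretic statement absorbs it.

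Finally, the ``in particular'' follows by taking $i=r=\gdepth(I_M,N)$: by Theorem~\ref{T24}, $\dim(R/J_r)\leqslant1$, hence $P_r=\bigcup_{j<r}\Supp(H^j_I(M,N))=\Supp(R/J_r)$ is finite, and $\Ass(\Ext^r_R(M/IM,N))\subseteq\Supp(\Ext^r_R(M/IM,N))$ is a set of associated primes of a finitely generated module, hence finite; therefore $\Ass(H^r_I(M,N))\subseteq\Ass(\Ext^r_R(M/IM,N))\cup P_r$ is finite.
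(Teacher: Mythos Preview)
Your overall strategy---localize at a prime $\fr p\notin P_i$ and compare $\Ext^i_R(M/IM,N)_{\fr p}$ with $H^i_I(M,N)_{\fr p}$---is exactly the right one, and your ``in particular'' paragraph is fine. But the core of the argument has a real gap.

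First, you only extract $\nu:=\depth((I_M)_{\fr p},N_{\fr p})\geqslant i$ from $\fr p\notin P_i$. You are missing the other half: if $\fr p\in\Ass(H^i_I(M,N))$ (or $\fr p\in\Ass(\Ext^i_R(M/IM,N))$) then $\fr p\in\Supp(H^i_I(M,N))$, and applying Lemma~\ref{L24} with index $i+1$ gives $\fr p\in\bigcup_{j\leqslant i}\Supp(\Ext^j_R(M/IM,N))$; combined with $\fr p\notin P_i$ this forces $\Ext^i_R(M/IM,N)_{\fr p}\neq 0$, hence $\nu=i$ exactly. So the ``boundary case $i=\nu$'' you worry about is the \emph{only} case that ever occurs.

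Second, and more seriously: at $i=\nu$ your long-exact-sequence argument only yields the injection $\Ext^i_R(M/IM,N)_{\fr p}\hookrightarrow H^i_I(M,N)_{\fr p}$, as you yourself notice (``wait, that requires $j<\nu$''). The injection is enough for the inclusion $\supseteq$, but it does \emph{not} give $\subseteq$: an associated prime of the target need not be an associated prime of a submodule. Your attempted rescue, ``when $i=\nu=\gdepth(I_M,N)$ \dots\ $\fr p$ is already in $\Supp(R/J_r)$'', conflates the local depth $\nu=\depth((I_M)_{\fr p},N_{\fr p})$ with the global invariant $\gdepth(I_M,N)$; these have no reason to coincide, and in any case the displayed equality must hold for every $i$, not just $i=r$.

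The clean way to close the gap is to invoke Lemma~\ref{L22} after localizing: once you know $i=\depth((I_M)_{\fr p},N_{\fr p})$, Lemma~\ref{L22} applied over $R_{\fr p}$ gives
\[
\Ass_{R_{\fr p}}\bigl(H^i_I(M,N)_{\fr p}\bigr)=\Ass_{R_{\fr p}}\bigl(\Ext^i_R(M/IM,N)_{\fr p}\bigr),
\]
and then $\fr pR_{\fr p}$ lies in one side iff it lies in the other, which is exactly the statement you need for both inclusions. This is what the paper does; you were essentially trying to reprove Lemma~\ref{L22} on the fly and got stuck at the point where its content is actually needed.
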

\begin{proof}   Let $\fr p\in\Ass(H^i_I(M,N))$. Assume that $\fr p\notin P_i$. Then, by Lemma \ref{L24}, we have $\Ext^i_R(M/IM,N)_{\fr p}\not=0 $ and $\Ext^j_R(M/IM,N)_{\fr p}=0$ for all $j<i.$
 It follows that $i=\depth((I_M)_{\fr p},N_{\fr p}).$  So,  by Lemma \ref{L22}, we get $$\Ass(H^i_I(M,N)_{\fr p})=\Ass(\Ext^i_R(M/IM,N)_{\fr p}).$$
Thus $\fr p\in\Ass(\Ext^i_R(M/IM,N))$, since $\fr pR_{\fr p}\in\Ass(H^i_I(M,N)_{\fr p})$.  Conversely, let $\fr p\in\Ass(\Ext^i_R(M/IM,N))$ and $\fr p\notin P_i$. By similar arguments as above, we can show that $i=\depth((I_M)_{\fr p},N_{\fr p})$. Hence $\fr p\in\Ass(H^i_I(M,N))$ by Lemma \ref{L22}. Therefore  $$\Ass(H^i_I(M,N))\bigcup P_i=\Ass(\Ext^i_R(M/IM,N))\bigcup P_i.$$

Finally, let $r=\gdepth(I_M,N)$. Then we get by Theorem \ref{T24} that $P_r$ is a finite set. Hence $\Ass(H^r_I(M,N))$ is a finite set as required.
\end{proof}

It has shown by \cite[Theorem B]{Kh} or \cite[Theorem 5.6]{Nh1} that if $i$ is an integer such that $\Supp(H^j_I(N))$ is a finite set for all $j<i$ then $\Ass(H^i_I(N))$ is a finite set. The next corollary gives us a description concretely of  this set $\Ass(H^i_I(N))$.

\begin{corollary}\label{c2} Let $i$ be a non negative integer and $P_i=\bigcup_{j<i}\Supp(H^j_I(N))$, then 
$$\Ass(H^i_I(N))\bigcup P_i=\Ass(\Ext^i_R(R/I,N))\bigcup P_i.$$
In particular, $\Ass(H^r_I(N))$ is finite for $r=\gdepth (I,N).$ 
\end{corollary}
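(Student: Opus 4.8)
The plan is to derive Corollary \ref{c2} as the special case $M=R$ of Theorem \ref{T28}. First I would observe that when $M=R$ we have $M/IM=R/I$, so the generalized local cohomology module $H^j_I(M,N)=H^j_I(R,N)$ coincides with the ordinary local cohomology module $H^j_I(N)$ for every $j\geqslant 0$; consequently $P_i=\bigcup_{j<i}\Supp(H^j_I(M,N))$ becomes exactly $\bigcup_{j<i}\Supp(H^j_I(N))$, which is the set $P_i$ appearing in the corollary. Plugging $M=R$ into the first assertion of Theorem \ref{T28} then yields immediately
$$\Ass(H^i_I(N))\bigcup P_i=\Ass(\Ext^i_R(R/I,N))\bigcup P_i,$$
which is the first displayed equality of the corollary.

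For the ``in particular'' part, I would note that $I_M=\ann_R(M/IM)=\ann_R(R/I)$, and since $I\subseteq\ann_R(R/I)$ while $\ann_R(R/I)\subseteq\sqrt{I}$, the ideals $I$ and $I_R$ have the same radical; hence $\gdepth(I_R,N)=\gdepth(I,N)=r$ because the notion of generalized regular sequence (and therefore the generalized depth) depends only on the radical of the ideal. Then the final assertion of Theorem \ref{T28}, applied with $M=R$, says precisely that $\Ass(H^r_I(R,N))=\Ass(H^r_I(N))$ is a finite set.

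There is essentially no obstacle here: the corollary is a verbatim specialization of the theorem, and the only small points to verify are the two identifications $H^j_I(R,N)\cong H^j_I(N)$ (which is stated in the introduction as the basic compatibility, and also follows from Lemma \ref{L21}$(ii)$ since $\Hom(R,-)$ is the identity) and $\gdepth(I_R,N)=\gdepth(I,N)$ (radical-invariance of generalized depth). I would write the proof in one or two sentences, simply citing Theorem \ref{T28} with $M=R$ and recording these two identifications. If one wants, one can also remark that in this case the first equality recovers and refines \cite[Theorem B]{Kh} and \cite[Theorem 5.6]{Nh1}, since when $\Supp(H^j_I(N))$ is finite for all $j<i$ the set $P_i$ is finite and the equality then forces $\Ass(H^i_I(N))$ to be finite as well.
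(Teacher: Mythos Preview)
Your proposal is correct and matches the paper's approach: Corollary~\ref{c2} is stated in the paper without a separate proof, being the immediate specialization $M=R$ of Theorem~\ref{T28}. One small simplification: in fact $I_R=\ann_R(R/I)=I$ on the nose (if $a\cdot 1\in I$ then $a\in I$), so the detour through radicals and radical-invariance of $\gdepth$ is unnecessary.
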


\section{The last integer $s$ such that $\Supp(H^s_I(M,N))$ is infinite }

The following theorem is the main result in this section. 

\begin{theorem}\label{T34} Let $s$ be an integer. Assume that $\pd_R(M)<+\infty$. Then 
\item$(a)$ The following statements are equivalent:
\begin{itemize}
\item[$(i)$] $\Supp(H^j_I(M,R/\fr p))$ is finite for all $j>s$ and all $\fr p\in\Assm(N)$, where $\Assm(N)$ denote the set of minimal elements of $\Ass(N)$;
\item[$(ii)$] $\Supp(H^j_I(M,N))$ is finite for all $j>s$;
\item[$(iii)$] $\Supp(H^{s+1}_I(M,R/\fr p))$ is finite for all $\fr p\in\Supp(N)$.
\end{itemize}
\item$(b)$ Assume that $\dim(N/I_MN)\geqslant 2$. Set $d=\dim(R)$, $r=\gdepth(I_M,N)$ and
$$\gamma=\sup\{\pd_{R_{\fr p}}(M_{\fr p})\mid\fr p\in\Supp(N/I_MN), \dim(R/\fr p)\geqslant 2)\}.$$

Let $s$ be the least integer satisfying one of three equivalent conditions in $(a)$, then
$$\max\{r, \gamma\}\leqslant s<d-1.$$
\end{theorem}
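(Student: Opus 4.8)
The plan is to handle parts $(a)$ and $(b)$ separately, leaning on Lemma~\ref{L24} throughout to translate statements about generalized local cohomology into statements about the Ext modules $\Ext^j_R(M/IM,N)$.

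For part $(a)$, the key observation is that by Lemma~\ref{L24}, $\bigcup_{j\geqslant s+1}\Supp(H^j_I(M,N))$ differs from $\bigcup_{j\geqslant 0}\Supp(H^j_I(M,N))$ only by the (finitely many, after Theorem~\ref{L204}) pieces with $j\leqslant s$; more precisely, finiteness of $\Supp(H^j_I(M,N))$ for all $j>s$ is equivalent to finiteness of $\bigcup_{j>s}\Supp(\Ext^j_R(M/IM,N))$, and since $\pd_R(M)<+\infty$ forces $\pd_R(M/IM)<+\infty$ and hence only finitely many nonzero Ext modules, this union is finite. First I would prove $(ii)\Leftrightarrow(iii)$: given a short exact sequence $0\mt R/\fr p_1\mt\cdots$ refining a prime filtration of $N$ with factors $R/\fr p$, $\fr p\in\Supp(N)$, the long exact sequences in $H^{\bullet}_I(M,-)$ show that finiteness of all $\Supp(H^j_I(M,N))$ for $j>s$ follows from finiteness of $\Supp(H^j_I(M,R/\fr p))$ for $j>s$ and each such $\fr p$; but then Theorem~\ref{L204} (giving vanishing above $d$) together with a descending-induction/dimension-shifting argument — the same device used in the proof of Theorem~\ref{L204}, writing $0\mt L\mt R^n\mt R/\fr p\mt 0$ — collapses ``finite for all $j>s$'' to ``finite for $j=s+1$''. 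The implications $(iii)\Rightarrow(i)$ is immediate since $\Assm(N)\subseteq\Supp(N)$, and $(i)\Rightarrow(ii)$ again uses a prime filtration of $N$ whose factors are $R/\fr q$ with $\fr q\supseteq\fr p$ for some $\fr p\in\Assm(N)$, combined with the observation $\Supp(H^j_I(M,R/\fr q))\subseteq\Supp(H^j_I(M,R/\fr p))$ when $\fr q\supseteq\fr p$ (which itself follows from Lemma~\ref{L24} and the analogous containment for Ext modules, or from surjecting $R/\fr p\twoheadrightarrow R/\fr q$ up to a filtration).

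For part $(b)$, the lower bound $r\leqslant s$ is essentially forced: by Theorem~\ref{T24}, $\Supp(H^r_I(M,N))$ is infinite, so $s$ cannot be smaller than $r$ (since $s$ is defined so that $\Supp(H^j_I(M,N))$ is finite for $j>s$, and if $s<r$ then in particular $\Supp(H^r_I(M,N))$ would be finite, a contradiction). For $\gamma\leqslant s$: pick $\fr p\in\Supp(N/I_MN)$ with $\dim(R/\fr p)\geqslant 2$ realizing $\gamma=\pd_{R_{\fr p}}(M_{\fr p})$; localizing at $\fr p$ and using Lemma~\ref{rm3} (or just the behavior of generalized local cohomology under localization implicit in Lemma~\ref{L24}) together with a nonvanishing statement for $\Ext^{\gamma}_{R_{\fr p}}(M_{\fr p}/IM_{\fr p},N_{\fr p})$ or an appropriate local-cohomology nonvanishing, one shows $\fr p\in\Supp(H^{\gamma}_I(M,N))$ with $\dim(R/\fr p)\geqslant 2$, forcing $\Supp(H^{\gamma}_I(M,N))$ infinite and hence $\gamma\leqslant s$. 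The upper bound $s<d-1$, i.e. $s\leqslant d-2$, is the place to be careful: it amounts to showing $\Supp(H^j_I(M,N))$ is finite for all $j\geqslant d-1$. By Theorem~\ref{L204} this is automatic for $j>d$, so only $j=d-1$ and $j=d$ remain, and here I would use Lemma~\ref{L24} to reduce to finiteness of $\Supp(\Ext^j_R(M/IM,N))$ for $j\in\{d-1,d\}$ — but that is not finite in general, so instead one must argue that the \emph{union} $\bigcup_{j\geqslant d-1}\Supp(H^j_I(M,N))$ is finite. Combining Corollary~\ref{C25} (everything sits inside $\bigcup_{j\leqslant n}\Supp(H^j_{I_M}(N))$ where $n=\dim N$) with Lemma~\ref{rm1} (finiteness of $\Supp(H^{n-1}_{I_M}(N))$) and Grothendieck vanishing, one gets that $\Supp(H^j_I(M,N))$ for the top few $j$ is controlled by the top two local-cohomology modules of $N$, which have finite support; a clean way is: $\Supp(H^{d-1}_I(M,N))\cup\Supp(H^d_I(M,N))\subseteq\bigcup_{j\geqslant d-1}\Supp(H^j_{I_M}(N))$ up to lower terms, and by Lemma~\ref{rm1} and Grothendieck vanishing applied over $R/\ann N$ this is finite.

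The main obstacle I expect is the upper bound $s<d-1$ in $(b)$: the naive reduction via Lemma~\ref{L24} only controls individual $\Ext^j$, which genuinely can have infinite support, so the argument must instead exploit the global structure — namely that the \emph{top} parts of $\bigcup_j\Supp(H^j_I(M,N))$ coincide (via Corollary~\ref{C25} and Lemma~\ref{L24}) with the top parts of $\bigcup_j\Supp(H^j_{I_M}(N))$, where Lemma~\ref{rm1} plus Grothendieck vanishing do give finiteness in codimensions $0$ and $1$. Getting the bookkeeping right — which indices ``$j>s$'' correspond to which local-cohomology indices after the shift by $\dim M$ or $\pd_R(M)$, and checking that the $j\leqslant s$ junk one throws away is harmless because we only care about finiteness — is the delicate part, and the strict inequality $s<d-1$ (rather than $s\leqslant d-1$) will come precisely from Lemma~\ref{rm1} covering the codimension-$1$ piece.
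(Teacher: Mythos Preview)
Your plan has genuine gaps in both parts. In part $(a)$, the ``key observation'' misreads Lemma~\ref{L24}: that lemma equates \emph{cumulative unions from below}, $\bigcup_{j<i}\Supp(H^j_I(M,N))=\bigcup_{j<i}\Supp(\Ext^j_R(M/IM,N))$, and says nothing about individual top terms, so finiteness for $j>s$ does not reduce to an Ext statement as you claim. (Also, $\pd_R(M)<\infty$ does \emph{not} force $\pd_R(M/IM)<\infty$: take $M=R$ over a non-regular ring with $I=\fr m$.) Your route through $(iii)\Rightarrow(i)$ via $0\to L\to R^n\to R/\fr p\to 0$ fails because $L$ is neither of the form $R/\fr q$ nor supported in $\Supp(N)$, so no inductive hypothesis applies to it. The paper's argument is essentially different: to climb from level $s+1$ to $s+2$ it splits on whether $I_M\subseteq\fr p$. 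If $I_M\not\subseteq\fr p$, pick $x\in I_M\setminus\fr p$, use $0\to R/\fr p\xrightarrow{x}R/\fr p\to R/(\fr p+xR)\to 0$, a prime filtration of the quotient (whose factors lie in $\Supp(N)$), and the $I_M$-torsion property of $H^{s+2}_I(M,R/\fr p)$. If $I_M\subseteq\fr p$, Lemma~\ref{L21}(ii) identifies $H^j_I(M,R/\fr p)$ with $\Ext^j_R(M,R/\fr p)$, and the hypothesis at level $s+1$ forces $\pd_{R_{\fr q}}(M_{\fr q})\leqslant s$ for $\fr q\supseteq\fr p$ with $\dim(R/\fr q)\geqslant 2$, whence $\dim(\Ext^{s+2}_R(M,R/\fr p))\leqslant 1$. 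For $(i)\Rightarrow(ii)$, the asserted containment $\Supp(H^j_I(M,R/\fr q))\subseteq\Supp(H^j_I(M,R/\fr p))$ for $\fr q\supseteq\fr p$ is unjustified; the paper instead proves Lemma~\ref{L221} via Vasconcelos's filtration (factors are homomorphic images of $N^{n_i}$) plus descending induction from Theorem~\ref{L204}.

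For the upper bound $s<d-1$ in part $(b)$, your attempt via Corollary~\ref{C25} and Lemma~\ref{rm1} cannot work, for the same reason: Corollary~\ref{C25} controls only the \emph{total} union $\bigcup_{j\geqslant 0}\Supp(H^j_I(M,N))$, not $\Supp(H^{d-1}_I(M,N))$ by itself; there is no mechanism here to isolate the top two degrees. The paper's proof is via a separate Lemma~\ref{P29}, using the spectral sequence $E_2^{i,j}=H^i_I(\Ext^j_R(M,R))\Rightarrow H^{i+j}_I(M,R)$ from the proof of Theorem~\ref{L204}: by Lemma~\ref{L202} one has $\dim(\Ext^j_R(M,R))\leqslant d-j$, so along $i+j=d$ every $E_2$-term is top local cohomology (hence Artinian), and along $i+j=d-1$ every $E_2$-term has finite support by Lemma~\ref{rm1}. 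This gives $H^d_I(M,R)$ Artinian and $\Supp(H^{d-1}_I(M,R))$ finite, and one then passes to $N$ via $0\to L\to R^n\to N\to 0$. This spectral-sequence step is the essential content of the bound $s<d-1$ and is absent from your proposal. (For $\gamma\leqslant s$, your sketch also needs tightening: one does not show $\fr p\in\Supp(H^\gamma_I(M,N))$ directly, but rather uses condition $(iii)$ of $(a)$ with $R/\fr p$, where Lemma~\ref{L21}(ii) identifies $H^{s+1}_I(M,R/\fr p)$ with $\Ext^{s+1}_R(M,R/\fr p)$ and nonvanishing at $\fr p$ follows from $\pd_{R_{\fr p}}(M_{\fr p})>s$.)
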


\noindent To prove Theorem \ref{T34}$(a)$, we need the following lemma.

\begin{lemma}\label{L221}Assume that $\pd_R(M)<+\infty$. Let $s$ be a non negative integer and $L$ a finitely generated $R$-module such that $\Supp(L)\subseteq\Supp(N)$. Then,  if  $\Supp(H^j_I(M,N))$  is a finite set for all $j>s$, so is $\Supp(H^j_I(M,L))$. 
\end{lemma}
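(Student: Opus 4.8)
\textbf{Proof strategy for Lemma \ref{L221}.}
The plan is to reduce the statement about an arbitrary $L$ with $\Supp(L)\subseteq\Supp(N)$ to the given hypothesis on $N$ by a standard filtration-and-induction argument, using long exact sequences of generalized local cohomology together with the vanishing from Theorem \ref{L204} to control the top end of the range. First I would observe that because $\pd_R(M)<+\infty$, Theorem \ref{L204} gives $H^j_I(M,-)=0$ for all $j>d=\dim(R)$ applied to any module, so only finitely many cohomological degrees $s<j\leqslant d$ are in question; this makes a descending induction on $j$ feasible and guarantees the induction gets started.

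Next I would set up the main reduction. Since $\Supp(L)\subseteq\Supp(N)=\bigcup_{\fr p\in\Assm(N)}V(\fr p)$, every prime in $\Ass(L)$ contains some $\fr p\in\Assm(N)$, and hence $L$ admits a prime filtration $0=L_0\subset L_1\subset\cdots\subset L_t=L$ whose successive quotients $L_k/L_{k-1}\cong R/\fr q_k$ satisfy $\fr q_k\in\Supp(N)$, i.e. $\fr q_k\supseteq\fr p_k$ for some $\fr p_k\in\Assm(N)$. By the long exact sequence in $H^\bullet_I(M,-)$ associated to $0\to L_{k-1}\to L_k\to R/\fr q_k\to 0$ and induction on the filtration length $t$, it suffices to prove the claim when $L=R/\fr q$ with $\fr q\in\Supp(N)$. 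For such an $L$ one has a surjection $R/\fr p\twoheadrightarrow R/\fr q$ for the corresponding $\fr p\in\Assm(N)$, whence (again via the long exact sequence, now for $0\to \fr q/\fr p\to R/\fr p\to R/\fr q\to 0$, noting $\fr q/\fr p$ is itself supported in $\Supp(N)$ and can be handled by a dimension/Noetherian induction on $\dim\Supp L$) it reduces further to the case $L=R/\fr p$ with $\fr p\in\Assm(N)$. Finally, since $\fr p\in\Ass(N)$, there is an injection $R/\fr p\hookrightarrow N$, and the long exact sequence in $H^\bullet_I(M,-)$ attached to $0\to R/\fr p\to N\to N/(R/\fr p)\to 0$ expresses $H^j_I(M,R/\fr p)$ between $H^{j-1}_I(M,N/(R/\fr p))$ and $H^j_I(M,N)$; running a descending induction on $j$ (top degree $>d$ being zero by Theorem \ref{L204}) and using that finiteness of supports passes through the three terms of a short exact sequence, one deduces that $\Supp(H^j_I(M,R/\fr p))$ is finite for all $j>s$, and then unwinds the filtration to conclude for general $L$.

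The main obstacle I anticipate is organizing the several nested inductions cleanly — the descending induction on cohomological degree $j$ (anchored by Theorem \ref{L204}), the induction on the length of the prime filtration of $L$, and the Noetherian induction on $\Supp(L)$ used to strip $R/\fr q$ down to $R/\fr p$ — so that each long exact sequence genuinely has its outer two terms already known to have finite support. One has to be careful that the auxiliary modules appearing (the $\fr q/\fr p$, the cokernel $N/(R/\fr p)$, the filtration quotients) all have support contained in $\Supp(N)$, which they do since $\Supp$ only shrinks under submodules and quotients and $\Supp(N)$ is closed; granting that, the finiteness propagates correctly. The key elementary fact used throughout is that in a short exact sequence $0\to A\to B\to C\to 0$ of finitely generated modules, finiteness of $\Supp$ for any two of $A,B,C$ — or more precisely, finiteness of the outer two terms in the relevant $\Ext$/local-cohomology long exact sequence — forces finiteness for the middle one, combined with Theorem \ref{L204} to terminate the descending induction.
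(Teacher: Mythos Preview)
Your overall architecture (vanishing from Theorem \ref{L204} to anchor a descending induction on $j$, then filtering $L$) is the right shape, but the final reduction step has a genuine gap. When you embed $R/\fr p\hookrightarrow N$ for $\fr p\in\Assm(N)$, the long exact sequence reads
\[
H^{j-1}_I\bigl(M,\,N/(R/\fr p)\bigr)\longrightarrow H^{j}_I(M,R/\fr p)\longrightarrow H^{j}_I(M,N),
\]
so controlling $H^{j}_I(M,R/\fr p)$ requires finiteness of the support of $H^{j-1}_I(M,N/(R/\fr p))$, one degree \emph{below} $j$. Descending induction on $j$ never supplies this; and at the bottom step $j=s+1$ you would need $\Supp\bigl(H^{s}_I(M,N/(R/\fr p))\bigr)$ finite, which is not part of the hypothesis. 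Noetherian induction on $\Supp(L)$ does not rescue this either, since $\Supp\bigl(N/(R/\fr p)\bigr)$ may well equal $\Supp(N)$. In short, using an \emph{injection} $R/\fr p\hookrightarrow N$ makes the connecting map point the wrong way for the induction.

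The paper avoids this by replacing the prime filtration with the Gruson--Vasconcelos filtration \cite[Theorem 4.1]{Va}: since $\Supp(L)\subseteq\Supp(N)$, there is a chain $0=L_0\subset\cdots\subset L_t=L$ in which each $L_i/L_{i-1}$ is a \emph{quotient} of some $N^{n_i}$. After reducing to $t=1$ one has $0\to U\to N^{n}\to L\to 0$, whence
\[
H^{j}_I(M,N^{n})\longrightarrow H^{j}_I(M,L)\longrightarrow H^{j+1}_I(M,U).
\]
Now the two outer terms are exactly what is available: the left by the hypothesis on $N$, the right by the descending-induction hypothesis at degree $j+1$ applied to $U$ (which still has $\Supp(U)\subseteq\Supp(N)$). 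The moral is that you need \emph{surjections from powers of $N$}, not embeddings into $N$, so that the auxiliary cohomology lands at degree $j+1$ rather than $j-1$.
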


\begin{proof} Let $d=\dim(R)$. By Theorem \ref{L204}, $H^j_I(M,L)=0$ for all $j>d$. Thus, we can assume that $s\leqslant d$. We proceed by descending induction on $j$. It is clear that  the assertion is true for $j\geqslant d+1$. Let $j<d+1$. Since $\Supp(L)\subseteq\Supp(N)$, we get by \cite[Theorem 4.1]{Va} that there exists a finite filtration
$$0=L_0\subset L_1\subset L_2\subset\cdots\subset L_t=L$$
such that for any $i=1,\ldots ,t,$  $L_i/L_{i-1}$ is a homomorphic image of $N^{n_i}$ for some integer $n_i>0$. Using short exact sequences $0\mt L_{i-1}\mt L_i\mt L_i/L_{i-1}\mt 0$ for $i=1,\ldots ,t,$  we can reduce the  situation to the case $t=1$. Therefore, there is an exact sequence $0\mt U\mt N^{n}\mt L\mt 0$ for some $n>0$ and some finitely generated $R$-module $U.$ So, we get a long exact sequence 
$$\cdots\mt H^j_I(M,N^n)\mt H^j_I(M,L)\mt H^{j+1}_I(M,U)\mt\cdots.$$
As $\Supp(U)\subseteq\Supp(N)$, we get  by induction that $\Supp(H^{j+1}_I(M,U))$ is finite. On the other hand, by the hypothesis, $\Supp(H^j_I(M,N^n))=\Supp(H^j_I(M,N))$ is finite. Therefore, by the above exact sequence, $\Supp(H^j_I(M,L))$ is finite as required.
\end{proof}

As an immediate consequence of Lemma \ref{L221}, we get the following result.

\begin{corollary}\label{c35} Assume that $\pd_R(M)<+\infty$. Let $s$ be a non negative  integer and $L$ a finitely generated $R$-module such that $\Supp(L)=\Supp(N)$. Then $\Supp(H^j_I(M,L))$ is finite for all $j>s$ if and only if so is $\Supp(H^j_I(M,N))$.
\end{corollary}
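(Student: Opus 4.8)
The plan is to deduce Corollary~\ref{c35} directly from Lemma~\ref{L221} by applying that lemma twice, in the two possible roles of the pair $(L,N)$. First I would observe that the hypothesis $\Supp(L)=\Supp(N)$ is symmetric: it gives both $\Supp(L)\subseteq\Supp(N)$ and $\Supp(N)\subseteq\Supp(L)$. Since $\pd_R(M)<+\infty$ is assumed throughout, both $L$ and $N$ are eligible inputs to Lemma~\ref{L221}.

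For the forward direction, assume $\Supp(H^j_I(M,L))$ is finite for all $j>s$. Apply Lemma~\ref{L221} with the roles of $L$ and $N$ interchanged — that is, take the ``$N$'' of the lemma to be our $L$, and the ``$L$'' of the lemma to be our $N$; the required containment $\Supp(N)\subseteq\Supp(L)$ holds by hypothesis. The conclusion of the lemma then says $\Supp(H^j_I(M,N))$ is finite for all $j>s$. The reverse direction is Lemma~\ref{L221} applied as stated, using $\Supp(L)\subseteq\Supp(N)$. This establishes the biconditional.

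There is essentially no obstacle here; the content is entirely contained in Lemma~\ref{L221}, and the corollary is just the symmetric packaging of it. The only point requiring a word of care is that Lemma~\ref{L221} is not symmetric in its statement (it concludes finiteness for the smaller-support module from finiteness for the larger-support module), so one must invoke it in both directions separately rather than citing it once — but since $\Supp(L)=\Supp(N)$ forces containment both ways, each invocation is legitimate.
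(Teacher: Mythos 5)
Your proposal is correct and matches the paper's intent exactly: the paper presents Corollary~\ref{c35} as an ``immediate consequence'' of Lemma~\ref{L221}, and the intended argument is precisely the double application of that lemma in both directions using $\Supp(L)=\Supp(N)$, as you describe.
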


Now, it is ready to prove Theorem \ref{T34}$(a)$.
\medskip

\noindent{\bf Proof of Theorem \ref {T34}}$(a)$. 

\noindent
$(i)\Rightarrow(ii).$ Assume that $\Assm(N)=\{\fr p_1,\ldots,\fr p_t\}$, so that $\Supp(H^j_I(M,R/\fr p_i))$ is finite for all $i=1,\ldots, t$. Set $L_0=0$ and $L_i=\oplus_{j=1}^i(R/\fr p_j)$ for each $i\in\{1,\ldots,t\}$. Note that since $\Ass(L_t)=\{\fr p_1,\ldots,\fr p_t\}=\Assm(N)$,   $\Supp(L_t)=\Supp(N)$.  It follows from Corollary \ref{c35} that $\Supp(H^j_I(M,N))$ is finite for all $j>s$ if we can show that $\Supp(H^j_I(M,L_t))$ is finite for all $j>s$. We do it now by induction on $t$. It is nothing to prove for $t=1$. Let $t>1$. From the exact sequence $0\mt L_{t-1}\mt L_t\mt R/\fr p_t\mt 0$
we get a long exact sequence
$$H^j_I(M,L_{t-1})\mt H^j_I(M,L_t)\mt H^j_I(M,R/\fr p_t).$$
Therefore $\Supp(H^j_I(M,L_t))$ is finite by (i) and the inductive hypothesis.
\medskip

\noindent
$(ii)\Rightarrow (iii)$ follows by Lemma \ref{L221}.
\medskip

\noindent
$(iii)\Rightarrow(i)$. By inductive method we need only to show that  $\Supp(H^{s+2}_I(M,R/\fr p))$ is finite for all $\fr p\in\Supp(N)$. Let $\fr p\in\Supp(N)$. If $\dim(R/\fr p)\leqslant 1$ then the finiteness of $\Supp(H^{s+2}_I(M,R/\fr p))$ is clear. Assume that $\dim(R/\fr p)\geqslant 2$. We consider two cases $I_M\nsubseteq\fr p$ and $I_M\subseteq\fr p$, where $I_M=\ann_R(M/IM)$ the annihilator of the module $M/IM$.

\noindent
Case 1: $I_M\nsubseteq\fr p$.  Then there exists  an $x\in I_M\setminus\fr p$. Set $G=R/(\fr p+xR)$, then $\Supp(G)\subseteq\Supp(N)$. We have a finite filtration
$$0=G_0\subset G_1\subset G_2\subset\cdots\subset G_t=G$$
such that, for each $i=1,\ldots,t$, $G_i/G_{i-1}\cong R/\fr p_i$ for some $\fr p_i\in\Supp(N)$. 
Now, with the same method that used in the proof of  $(i)\Rightarrow(ii)$ we can show that $$\Supp(H^{s+1}_I(M,G))=\Supp(H^{s+1}_I(M,G_t))$$ is a finite set. 
On the other hand, we derive from the exact sequence $0\mt R/\fr p\xrightarrow{x} R/\fr p\mt G\mt 0$ an exact sequence 
$$H^{s+1}_I(M,G)\mt (0:x)_{H^{s+2}_I(M,R/\fr p)}\mt 0.$$
Thus,  $\Supp((0:x)_{H^{s+2}_I(M,R/\fr p)})$ and so $\Supp((0:I_M)_{H^{s+2}_I(M,R/\fr p)})$ is finite. Moreover, since $H^{s+2}_I(M,R/\fr p)$ is $I_M$-torsion by Lemma \ref{L21},   $$\Supp(H^{s+2}_I(M,R/\fr p))=\Supp((0:I_M)_{H^{s+2}_I(M,R/\fr p)})$$ is finite. 

\noindent
Case 2: $I_M\subseteq\fr p$. Then, by Lemma \ref{L21}, we have $$H^j_I(M,R/\fr p)\cong\Ext^j_R(M,R/\fr p)$$ for all $j\geqslant 0$. For  any $\fr q\in\Supp(R/\fr p)$ with $\dim(R/\fr q)\geqslant 2$, we get by our assumption that $$\Supp(\Ext^{s+1}_R(M,R/\fr q))=\Supp(H^{s+1}_I(M,R/\fr q))$$ is finite. Therefore 
$$\Ext^{s+1}_{R_{\fr q}}(M_{\fr q},k(\fr q))=\Ext^{s+1}_R(M,R/\fr q)_{\fr q}=0.$$
Then, by \cite[Proposition 1.3.1]{Bh}, we have $\Tor_{s+1}^{R_{\fr q}}(M_{\fr q},k(\fr q))=0$ . It follows by \cite[Corollary 19.5]{Ei} that $\pd_{R_{\fr q}}(M_{\fr q})<s+1.$ Hence 
$$\Ext^{s+2}_R(M,R/\fr p)_{\fr q}=\Ext^{s+2}_{R_{\fr q}}(M_{\fr q},(R/\fr p)_{\fr q})=0$$
 for all $\fr q\in\Supp(R/\fr p)$ satisfying $\dim(R/\fr q)\geqslant 2$. Thus, $\dim((\Ext^{s+2}_R(M,R/\fr p)))\leqslant 1$ and therefore
$$\Supp(H^{s+2}_I(M,R/\fr p))=\Supp(\Ext^{s+2}_R(M,R/\fr p))$$
is finite. The proof of Theorem \ref{T34}$(a)$ is complete.\hfill\ $\square$

\medskip

To prove Theorem \ref{T34},$(b)$ we need one lemma more. 

\begin{lemma}\label{P29} Let $d=\dim(R)$. Assume that $\pd_R(M)<+\infty$. Then the following statements are true. 
\begin{itemize}
\item[$(i)$] $H^d_I(M,N)$ is Artinian.
\item[$(ii)$] $\Supp(H^{d-1}_I(M,N))$ is a finite set.
\end{itemize}
\end{lemma}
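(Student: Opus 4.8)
The plan is to prove both assertions first in the case $N=R$, by exploiting the convergent spectral sequence
$$E^{i,j}_2=H^i_I(\Ext^j_R(M,R))\underset{i}{\Longrightarrow}H^{i+j}_I(M,R)$$
constructed in the proof of Theorem \ref{L204}, and then to pass to an arbitrary finitely generated $N$ by means of a short exact sequence $0\mt L\mt R^n\mt N\mt 0$ with $L$ finitely generated.

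For $(i)$ with $N=R$: exactly as in the proof of Theorem \ref{L204}, $H^d_I(M,R)$ carries a finite filtration whose successive quotients are subquotients of $E^{i,d-i}_2=H^i_I(\Ext^{d-i}_R(M,R))$, $0\leqslant i\leqslant d$. By Lemma \ref{L202}, $\dim(\Ext^{d-i}_R(M,R))\leqslant i$, so each $H^i_I(\Ext^{d-i}_R(M,R))$ is either zero by Grothendieck's Vanishing Theorem or the top local cohomology module of a finitely generated module of dimension $i$, which is well known to be Artinian. Since subquotients and extensions of Artinian modules are Artinian, $H^d_I(M,R)$ is Artinian. For general $N$, choose a surjection $R^n\mt N$ with finitely generated kernel $L$; the long exact sequence gives an exact sequence $H^d_I(M,R^n)\mt H^d_I(M,N)\mt H^{d+1}_I(M,L)$, where $H^d_I(M,R^n)\cong H^d_I(M,R)^n$ and $H^{d+1}_I(M,L)=0$ by Theorem \ref{L204}, so $H^d_I(M,N)$ is a homomorphic image of the Artinian module $H^d_I(M,R)^n$ and hence Artinian.

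For $(ii)$ with $N=R$: the same spectral sequence shows that $H^{d-1}_I(M,R)$ has a finite filtration with successive quotients subquotients of $H^i_I(\Ext^{d-1-i}_R(M,R))$, $0\leqslant i\leqslant d-1$. Put $K=\Ext^{d-1-i}_R(M,R)$; by Lemma \ref{L202}, $\dim(K)\leqslant i+1$, so $H^i_I(K)$ vanishes if $i>\dim(K)$, is Artinian (hence of finite support) if $i=\dim(K)$, and has finite support if $i=\dim(K)-1$ by Lemma \ref{rm1}. Therefore $\Supp(H^{d-1}_I(M,R))$ is finite. For general $N$, from $0\mt L\mt R^n\mt N\mt 0$ we obtain an exact sequence $H^{d-1}_I(M,R^n)\mt H^{d-1}_I(M,N)\mt H^d_I(M,L)$; the left term is $H^{d-1}_I(M,R)^n$, of finite support by the case already treated, and the right term is Artinian by $(i)$, hence of finite support; so $\Supp(H^{d-1}_I(M,N))$ is finite.

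The step I expect to be the main obstacle is the base case $N=R$: everything hinges on combining the spectral sequence of Theorem \ref{L204} with the dimension bound of Lemma \ref{L202}, and on the input that the top local cohomology module $H^{\dim L}_I(L)$ of a finitely generated $R$-module $L$ is Artinian. Granting this, the remaining steps — the filtration bookkeeping, the reduction from $N$ to $R^n$ (which uses only additivity of $H^j_I(M,-)$ on finite direct sums together with Theorem \ref{L204}), and the support estimate from a single three-term exact sequence — are routine.
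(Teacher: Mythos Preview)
Your proposal is correct and follows essentially the same route as the paper: both prove the case $N=R$ via the spectral sequence $E^{i,j}_2=H^i_I(\Ext^j_R(M,R))\Rightarrow H^{i+j}_I(M,R)$ combined with Lemma~\ref{L202}, the Artinianness of top local cohomology, and Lemma~\ref{rm1}, and then reduce the general case via a presentation $0\to L\to R^n\to N\to 0$ together with Theorem~\ref{L204} and part~$(i)$. The only cosmetic difference is that in $(ii)$ the paper splits into the cases $\pd_R(M)<d-1$ and $\pd_R(M)\geqslant d-1$ to isolate when $\Ext^{d-1-i}_R(M,R)$ vanishes, whereas your trichotomy on $i$ versus $\dim K$ handles this uniformly.
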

\begin{proof} $(i)$ First, we claim that $H^d_I(M,R)$ is Artinian. By using the spectral sequence as in the proof of Theorem \ref{L204}, we obtain  a finite filtration
$$0=\phi^{d+1}H^d\subseteq\phi^dH^d\subseteq\ldots\subseteq\phi^1H^d\subseteq\phi^0H^d=H^d$$
of the module $H^d=H^d_I(M,R)$ such that $E^{i,d-i}_{\infty}\cong\phi^iH^d/\phi^{i+1}H^d$ for all $0\leqslant i\leqslant d$; and for any $i=0,\ldots,d$ there exists exact sequences $$0\mt\phi^{i+1}H^t\mt\phi^{i}H^t\mt E^{i,t-i}_{\infty}\mt 0.$$
Note that $E^{i,d-i}_{\infty}$ is a subquotient of $E^{i,d-i}_2$ for all $0\leqslant i\leqslant d$, where $$E^{i,d-i}_2=H^i_I(\Ext^{d-i}_R(M,R)).$$ Thus, by using the above exact sequences, in order to prove the Artinianness of $H^d_I(M,R)$, we need only to show that $H^i_I(\Ext^{d-i}_R(M,R))$ is Artinian for all $0\leqslant i\leqslant d$. Note that $\pd_R(M)\leqslant d$ and $\Ext^j_R(M,R)=0$ for all $j>\pd_R(M)$. Therefore $\dim(\Ext^{d-i}_R(M,R))\leqslant i$ for all $0\leqslant i\leqslant d$ by  Lemma \ref{L202}. It follows by \cite[Theorem 7.1.6]{Bs} that $H^{i}_I(\Ext^{d-i}_R(M,R))$ is Artinian for all $0\leqslant i\leqslant d$ and  the claim is proved.

Now, there exists an exact sequence $0\mt L\mt R^n\mt N\mt 0$, where $n$ is an integer and $L$ is a finitely generated $R$-module. Hence, in view of Theorem \ref{L204}, we get an exact sequence $$H^d_I(M,R^n)\mt H^d_I(M,N)\mt 0.$$ Since $H^d_I(M,R^n)\cong H^d_I(M,R)^n$, so we get by the claim that $H^d_I(M,R^n)$ is Artinian. Therefore $H^d_I(M,N)$ is Artinian.
\medskip

\noindent
$(ii)$ First, we show that $\Supp(H^{d-1}_I(M,R))$ is finite. By similar arguments as in the proof of $(i)$,  we need only to show that $\Supp(E^{i,d-1-i}_2)$ is finite for all $0\leqslant i\leqslant d-1$, where $E^{i,d-1-i}_2=H^i_I(\Ext^{d-1-i}_R(M,R))$. Indeed,  consider two cases:  

\noindent
The first case: $0\leqslant\pd_R(M)<d-1$. If $0\leqslant i<d-1-\pd_R(M)$ then $\pd_R(M)<d-1-i\leqslant d-1.$ It implies $\Ext^{d-1-i}_R(M,R)=0$, so that $E^{i,d-1-i}_2=0$ for all $0\leqslant i<d-1-\pd_R(M)$. If  $d-1-\pd_R(M)\leqslant i\leqslant d-1$ then $0\leqslant d-1-i\leqslant\pd_R(M)$. Thus, by Lemma \ref{L202}, $\dim(\Ext^{d-1-i}_R(M,R))\leqslant i+1$. It implies by Lemma \ref{rm1} that $\Supp(E^{i,d-1-i}_2)$ is finite for all $d-1-\pd_R(M)\leqslant i\leqslant d-1$. Therefore $\Supp(E^{i,d-1-i}_2)$ is finite for all $0\leqslant i\leqslant d-1$. 

\noindent
The second case: $\pd_R(M)\geqslant d-1$. By Lemma \ref{L202}, $\dim(\Ext^{d-1-i}_R(M,R))\leqslant i+1$ for all $0\leqslant i\leqslant d-1$. Hence, by Lemma \ref{rm1}, $\Supp(E^{i,d-1-i}_2)$ is finite for all $0\leqslant i\leqslant d-1$ and the conclusion follows.
\medskip

Next, we prove that $\Supp(H^{d-1}_I(M,N))$ is finite. As in the proof of (i) we get an exact sequence
$$H^{d-1}_I(M,R^n)\mt H^{d-1}_I(M,N)\mt H^d_I(M,L).$$
Since $\Supp(H^{d-1}_I(M,R^n))$ is a finite set by the claim above and $H^d_I(M,L)$ is Artinian by (i), it follows from the exact sequence above that $\Supp(H^{d-1}_I(M,N))$ is finite as required.
\end{proof}

\noindent{\bf Proof of Theorem \ref {T34}}$(b)$. 

Let $d=\dim(R)$. Let $s$ be the least integer satisfying one of three equivalent conditions in Theorem \ref{T34}$(a)$. Then, by Lemma \ref{P29}, we have $s<d-1.$ By Theorem \ref{T24} to prove $\max\{r,\gamma\}\leqslant s$, where $r=\gdepth(I_M,N)$ and  $$\gamma=\sup\{\pd_{R_{\fr p}}(M_{\fr p})\mid\fr p\in\Supp(N/I_MN), \dim(R/\fr p)\geqslant 2\},$$
 we have only to show  that $\gamma\leqslant s$. Indeed, assume that  $\gamma>s$, then there exists $\fr p\in\Supp(N/I_MN)$ such that $\dim(R/\fr p)\geqslant 2$ and $\pd_{R_{\fr p}}(M_{\fr p})> s$. From this, we get by \cite[Corollary 19.5]{Ei} that  $\Tor_{s+1}^{R_{\fr p}}(M_{\fr p},k(\fr p))\not=0$. So, by \cite[Proposition 1.3.1]{Bh}, we get $\Ext^{s+1}_R(M,R/\fr p)_{\fr p}\not=0$. It follows by \cite[Theorem 144]{Kap} that $\Supp(\Ext^{s+1}_R(M,R/\fr p))$ is an infinite set. Moreover, since $I_M\subseteq\fr p$, we have by Lemma \ref{L21} that $H^{s+1}_I(M,R/\fr p)=\Ext^{s+1}_R(M,R/\fr p)$. It follows that  $\Supp(H^{s+1}_I(M,R/\fr p))$ is infinite. This contradicts with the choice of $s$. Thus $\gamma\leqslant s$ as required. \hfill$\square$ 

\begin{remark} $(i)$ In general, there does exist the integer $s$ in Theorem \ref{T34}$(a)$ if $\pd_R(M)=+\infty$. For example, let $k$ be a field and $R=k[[x,y,u,v]]/\fr a$, where $\fr a=(x^2,y^2)$. Set $\fr p=(x,y)/\fr a$. It is clear that $R_{\fr p}$ is not integral domain, and thus $R_{\fr p}$ is not a regular local ring. Hence $\pd_{R_{\fr p}}(k(\fr p))=\injd_{R_{\fr p}}(k(\fr p))=+\infty$, where $\injd_{R_{\fr p}}(k(\fr p))$ is the injective dimension of $R_{\fr p}$-module $R_{\fr p}/\fr pR_{\fr p}$. Now, let $M=N=R/\fr p$ and $I=\fr p$. Hence $\pd_R(M)=\injd_R(N)=+\infty$ and $\Ext^j_R(M,N)_{\fr p}=\Ext^j_{R_{\fr p}}(k(\fr p),k(\fr p))\not=0$ for all $j\geqslant 0$. Thus $\Supp(H^j_I(M,N))=\Supp(\Ext^j_R(M,N))$ is an infinite set for all $j\geqslant 0$.
\medskip

\noindent
$(ii)$ We can not replace the condition that $\fr p$ runs through  the set $\Supp(N)$ in statement $(iii)$ of Theorem \ref{T34}$(a)$ by the condition that $\fr p$ runs through the set $\Ass(N)$. Indeed, let $k$ be a field and $R=k[[x,y,u,v]]$. Set $I=(x,y)R$ and $M=R/(y)$, then $I_M=I$. Since $R$ is a regular local ring, $\pd_R(M)<\infty$. Let $N=R/(x)\cap (x^2,u)\cap (x^2,y,u^2).$ Then $\dim(N)=3$ and $\Ass(N)=\{xR, (x,u)R, (x,y,u)R\}$. It is clear that $y\in I_M$ is a generalized regular element of $N$, and $\ann(N/yN)\supseteq (x^2,y)$. Thus, for any $\fr q\in\Ass(N/yN)$ with $\dim(R/\fr q)\geqslant 2$, we have $\fr q\supseteq I_M$, so that $\gdepth(I_M,N/yN)=0$. This implies $\gdepth(I_M,N)=1$. Moreover, it is easy to see that $H^0_I(M,R/(x))=0$, $H^0_I(M,R/(x,u))=0$ and $\dim(H^0_I(M,R/(x,y,u)))=1.$ Hence $\Supp(H^0_I(M,R/\fr p))$ is finite for all $\fr p\in\Ass(N)$. However, we get by Theorem \ref{T24} that $\Supp(H^1_I(M,N))$ is not finite.
\end{remark}

\end{document}